\newtheorem{thm}{Theorem}[section]
\newtheorem{cor}[thm]{Corollary}
\newtheorem{lemma}[thm]{Lemma}
\theoremstyle{definition}
\newtheorem{defn}[thm]{Definition}
\newtheorem{rem}[thm]{Remark}
\renewcommand{\int}[1]{int\left(#1\right)}
\newcommand{\cl}[1]{cl\left(#1\right)}
\newcommand{\R}{{\mathbb R}}
\newcommand{\C}{{\mathbb C}}
\newcommand{\rtt}{{\R^{3,2}}}
\newcommand{\rto}{\R^{2,1}}
\newcommand{\E}{\mathbb{E}^{2,1}} 
\newcommand{\HP}{H^{2}}
\newcommand{\SOTO}{\operatorname{SO}(2,1)}
\newcommand{\SOoTO}{\operatorname{SO}^0(2,1)}
\newcommand{\POTT}{{\operatorname{PO}(3,2)}}
\newcommand{\SOTT}{{\operatorname{SO}(3,2)}}
\newcommand{\sott}{{\operatorname{SO}^0(3,2)}}
\newcommand{\T}{{\mathcal T}}
\renewcommand{\C}{{\mathcal C}}
\newcommand{\N}{{\mathcal N}}
\renewcommand{\L}{{\mathcal L}}
\newcommand{\cone}{\N^{3,2}}
\newcommand{\econe}[1]{\L(#1)}
\newcommand{\Ein}[1]{\operatorname{Ein}_{#1}}
\newcommand{\Eint}{\Ein{3}}
\newcommand{\Einhat}{\widehat{\Eint}}
\newcommand{\CONF}{{\operatorname{Conf}(\Eint)}}
\newcommand{\NF}[1]{\T(#1)} 
\newcommand{\einp}[1]{\Ein{2}(#1)}
\newcommand{\stem}[1]{\mathsf{Stem}(#1)} 
\newcommand{\wing}[1]{\mathsf{W}(#1)} 
\newcommand{\CP}[1]{\C(#1)}
\newcommand{\CHS}[1]{\mathcal{H}(#1)}
\newcommand{\CS}{\mathcal{S}}
\newcommand{\Quad}[1]{\mathsf{Quad}(#1)} 
\newcommand{\allow}[1]{AP(#1)}  
\newcommand{\data}{D}
\newcommand{\conf}[1]{\overline{#1}^{\mbox{\tiny conf}}}
\newcommand{\xm}[1]{{#1}^-}
\newcommand{\xp}[1]{{#1}^+}
\newcommand{\xpm}[1]{{#1}^\pm}
\newcommand{\vz}{\mathsf{z}}
\newcommand{\vu}{\mathsf{u}}
\newcommand{\vy}{\mathsf{y}}
\newcommand{\vx}{\mathsf{x}}
\newcommand{\vv}{\mathsf{v}}
\newcommand{\vp}{\mathsf{p}}
\newcommand{\vo}{\mathsf{0}}
\newcommand{\ldot}[1]{\langle #1\rangle}
\renewcommand{\gg}{\gamma}
\newcommand{\hh}{\eta}
\renewcommand{\tt}{\tau}
\newcommand{\rr}{\rho}
\newcommand{\mm}{\mu}
\begin{document}

\title{Fundamental domains in the Einstein Universe}

\author[Charette]{Virginie Charette}
  \address{D\'epartement de math\'ematiques\\ Universit\'e de Sherbrooke\\
  Sherbrooke, Quebec, Canada}
  \email{v.charette@usherbrooke.ca}

\author[Francoeur]{Dominik Francoeur}
  \address{D\'epartement de math\'ematiques\\ Universit\'e de Sherbrooke\\
   Sherbrooke, Quebec, Canada}
   \email{dominik.francoeur@usherbrooke.ca}

\author[Lareau-Dussault]{Rosemonde Lareau-Dussault}
\address{Department of mathematics\\ University of Toronto\\
  Toronto, Ontario, Canada}
\email{rosemonde.lareau.dussault@mail.utoronto.ca}

\begin{abstract}
We will discuss fundamental domains for actions of discrete groups on the 3-dimensional Einstein Universe.  These will be bounded by crooked surfaces, which are conformal compactifications of surfaces that arise in the construction of Margulis spacetimes.  We will show that there exist pairwise disjoint crooked surfaces in the 3-dimensional Einstein Universe.  As an application, we can construct explicit examples of groups acting properly on an open subset of that space.
\end{abstract}

\thanks{The authors gratefully acknowledge partial support from the
  Natural Sciences and Engineering Research Council of Canada.
 }

\maketitle

\tableofcontents

\section{Introduction}

This paper describes fundamental polyhedra for actions on the 3-dimensional Einstein Universe, bounded by {\em crooked surfaces}.   The Einstein Universe is a Lorentzian manifold, in fact it is the conformal compactification of Minkowski spacetime.  As such, it enjoys a wealth of geometric properties.  A rich diversity of groups act on the Einstein Universe, displaying a wide range of dynamic behavior.

Our interest in the Einstein Universe arises from proper actions of free groups on Minkowski spacetime.  This story starts with Margulis, who constructed examples of 
proper actions on affine 3-space by free non-abelian groups~\cite{Ma83,Ma87}.  Such groups preserve an inner product of signature (2,1) on the underlying vector space of directions.  Thus a quotient of affine space by such an action is endowed with a Lorentzian metric, motivating us to call it a 
{\em Margulis spacetime\/}.

Next, Drumm constructed fundamental domains for these actions, bounded by {\em crooked planes}.  The geometry of crooked planes has been extensively studied by Drumm, Goldman and others~\cite{D92,DG99,BCDG}.

Crooked surfaces were introduced by Frances to study conformal 
compactifications of Margulis spacetimes~\cite{F03}.  (See also~\cite{BCDGM}.)  In current work, Danciger, Gu\'eritaud and Kassel introduce analogous surfaces in anti de Sitter spacetimes~\cite{DGK13}.  Goldman establishes a relation between their new construction and Frances' crooked surfaces~\cite{Go13}.

Some of the earliest work on proper actions on pseudo-Riemannian manifolds is due to Kulkarni; see for example~\cite{K78,K80}.  Frances has examined the dynamics of proper group actions on the Einstein Universe.  In particular, he proved the existence of {\em Lorentzian Schottky groups}, that is, groups acting on the Einstein Universe with ``ping-pong dynamics''~\cite{F05}.  On the other hand, the crooked surfaces considered in~\cite{F03} intersect in a point.  This led us to ask the question\,: how can we obtain {\em disjoint} crooked surfaces in the Einstein Universe?

We are able to do this by adapting a strategy, described in~\cite{BCDG}, based on Drumm's original approach.  Basically, one can ``pull apart'' crooked planes in affine space sharing only a common vertex.  The crucial observation is to see that this pulling apart can also be done at a point on the conformal compactification of the crooked planes, yielding pairwise disjoint surfaces.  The upshot is that we are able to construct explicit examples of groups displaying ping-pong dynamics on the Einstein Universe.

Section~\ref{sec:einstein} introduces the Einstein Universe of dimension 3, and its causal structure.  Namely, as the conformal compactification of Minkowski spacetime, it inherits a conformally flat Lorentzian structure.  Section~\ref{sec:crookedsurface} discusses crooked surfaces.  We first describe them as conformal compactifications of crooked planes and then, propose a ``coordinate-free'' description of crooked surfaces based on Einstein tori.  We also show how to obtain disjoint crooked planes; in Section~\ref{sec:disjoint}, we apply this strategy to get disjoint crooked surfaces.  Section~\ref{sec:schottky} relates our construction to the Lorentzian Schottky groups introduced by Frances.  Finally, in Section~\ref{sec:negative}, we briefly describe an interesting example of disjoint crooked surfaces, arising from a pair of fundamentally incompatible crooked planes in Minkowski spacetime.

\subsection*{Acknowledgements}  Several people listened to talks about this paper while it was in preparation and offered greatly appreciated suggestions and encouragement.  While it would be difficult to mention everyone, we would particularly like to thank Thierry Barbot, Marc Burger, Todd Drumm, Charles Frances, Bill Goldman, Alessandra Iozzi and Anna Wienhard.  The first author thanks IHES for its
hospitality during the initial writing of this paper.   We also would like to thank the anonymous referee for several helpful suggestions and comments.

\section{Einstein Universe}\label{sec:einstein}
The Einstein Universe, $\Ein{n}$, can be defined as the projectivisation of the 
lightcone of $\R^{n,2}$.  We will write everything for $n=3$, as this is the 
focus of the paper.

Let $\rtt$ denote the vector space $\R^5$ endowed with a symmetric bilinear form
of signature $(3,2)$, denoted $\ldot{,}$.  We can choose a basis relative to which, for $\vx=(x_1,x_2,x_3,x_4,x_5)$
and $\vy=(y_1,y_2,y_3,y_4,y_5)\in\R^5$:
\begin{equation*}
 \ldot{\vx,\vy}=x_1y_1+x_2y_2+x_3y_3-x_4y_4-x_5y_5.
\end{equation*}
Let $\vx^\perp$ denote the orthogonal hyperplane to $\vx$\,:
\begin{equation*}
\vx^\perp=\{ \vy\in\rtt~\mid~ \ldot{\vx,\vy}=0\}
\end{equation*}
and $\cone$, the {\em lightcone} of $\rtt$\,:
\begin{equation*}
 \cone=\{ \vx\in\rtt\setminus \vo~\mid~ \ldot{\vx,\vx}=0\} .
\end{equation*}
(Note that we do not include the zero vector in the lightcone.)  

We obtain the Einstein Universe as the quotient of $\cone$ under the action of the non-zero reals by scaling\,: 
\begin{equation*}
\Eint=\cone/\R^* .
\end{equation*}
Denote by $\pi(\vv)$ the image of $\vv\in\cone$ under this projection.  Alternatively, we will write\,: 
\begin{equation*}
\pi(v_1,v_2,\ldots,v_n)=(v_1:v_2:\ldots:v_n)
\end{equation*}
when it alleviates notation.

We can express the orientable double-cover of the Einstein Universe, $\Einhat$, as a quotient as well, this time by the action of the positive reals\,: 
\begin{equation*}
\Einhat=\cone/\R^+ .
\end{equation*}
Any lift of $\Einhat$ to $\cone$ induces a Lorentzian metric on $\Einhat$ by restricting
$\ldot{,}$ to the image of the lift.  For instance, the intersection with
$\cone$ of the sphere of radius 2, centered at $\vo$, consists of vectors $\vx$
such that\,: 
\begin{equation*}
 x_1^2+x_2^2+x_3^2=1=x_4^2+x_5^2.
\end{equation*}
It projects bijectively to $\Einhat$, endowing it with the Lorentzian product
metric $dg^2-dt^2$, where $dg^2$ is the standard round metric on the
2-sphere $S^2$, and $dt^2$ is the standard metric on the circle $S^1$.

Thus $\Eint$ is conformally equivalent to:
\begin{equation*}
 S^2\times S^1/\!\sim, \mbox{ where }\vx\sim-\vx  .
\end{equation*}
Here $-I$ factors into the product of two antipodal maps.  

Any metric on $\Einhat$ pushes forward to a metric on $\Eint$.  Thus $\Eint$ inherits a conformal class of Lorentzian metrics from the ambient
spacetime
$\rtt$.   The group of conformal automorphisms of $\Eint$ is\,: 
\begin{equation*}
\CONF\cong\POTT\cong\SOTT.
\end{equation*}
As $\SOTT$ acts transitively on $\cone$, the group $\CONF$ acts transitively on $\Eint$.

Slightly abusing notation, we will also denote by $\pi(p)$ the image of 
$p\in\Einhat$ under projection onto $\Eint$.  

The antipodal map being orientation-reversing in the first factor, but
orientation-preserving
in the second, $\Eint$ is non-orientable.  However, it is {\em
time-orientable}, in the sense 
that a future-pointing timelike vector field on $\rtt$ induces one on $\Eint$.

\subsection{Conformally flat Lorentzian structure on $\Eint$}\label{sec:conformal}

Minkowski spacetime may be embedded in an open dense subset of the Einstein Universe.  We shall describe one such embedding in dimension three. 
Let $\rto$ be the 3-dimensional real vector space endowed with a Lorentzian inner product (of signature (2,1)), denoted by $\cdot$.  Set\,:
\begin{align*}
 \iota : \rto & \longrightarrow \Eint \\
\vv & \longmapsto\left( \frac{1-\vv\cdot\vv}{2} :\vv :\frac{1+\vv\cdot\vv}{2}\right) .
\end{align*}
This is a conformal transformation that maps $\rto$ to a
neighborhood of $(1:0:0:0:1)$.  In fact, setting\,:
\begin{equation*}
p_\infty=(-1:0:0:0:1)
\end{equation*}
then \,:
\begin{equation*}
\iota(\rto)=\Eint\setminus\econe{p_\infty}
\end{equation*}
where $\econe{p_\infty}$ is the lightcone at $p$.  (See~\S\ref{sec:photons}.)  Thus $\Eint$ is the {\em conformal compactification of $\rto$}.  

Since $\CONF$ acts transitively on $\Eint$, every point of the Einstein Universe
admits a neighborhood that is conformally equivalent to $\rto$.  In other words,
$\Eint$ is a {\em conformally flat Lorentzian manifold}.

Let $\E$ denote the affine space modeled on $\rto$, that is, the affine space whose underlying vector space of translations 
is endowed with the Lorentzian inner product above.  

By a slight abuse of notation, $\iota$ will also denote the map from $\E$ to $\Eint$ arising from an identification of $\E$ with $\rto$.  In this paper, for $p\in\E$, we will define $\iota(p)$ to be equal to $\iota(\vp)$, where\,: 
\begin{equation}\label{eq:vecp}
\vp=p-(0,0,0).
\end{equation}
Thus $\Eint$ is the conformal compactification of $\E$. 

The embedding $\iota$ induces a monomorphism from the group of Lorentzian similarities into the subgroup of $\SOTT$ which fixes $p_\infty$.  An explicit expression for this map is given in~\cite{BCDGM}.

\subsection{Photons and lightcones}\label{sec:photons}

Let us describe the causal structure of $\Eint$, namely photons and
lightcones.  Conformally
equivalent Lorentzian metrics give rise to the same causal structure (see for instance~\cite{F02}).  As a
matter of fact, the non-parametrized lightlike geodesics are the same.  This
will mean that anything defined in terms of the causal structure of a given
metric will in fact be well defined in the conformal class of that metric.  We will mostly follow here the terminology adopted in~\cite{BCDGM}.

Recall that, given a vector space $V$ endowed with a non-degenerate, symmetric
bilinear form $\ldot{,}$, a subspace of $W\subset V$ is {\em totally isotropic}
if $\ldot{,}$ restricts to an identically zero form on $W$.  In particular, $W\subset\rtt$ is
totally isotropic if and only if $W\setminus\vo\subset\cone$.

\begin{defn}
Let $W\subset\rtt$ be a totally isotropic plane.  Then $\pi(W\setminus\vo)$ is called a
{\em photon}.
\end{defn}
A photon is an unparametrized lightlike geodesic of $\Eint$.  The homotopy 
class of any photon generates the fundamental group of $\Eint$ (with base point chosen on that photon).

\begin{defn}
Two points $p,q\in\Eint$ are said to be {\em incident} if they lie on a common
photon.
\end{defn}

\begin{defn}
Let $p\in\Eint$.  The {\em lightcone} at $p$, denoted $\econe{p}$, is the union
of all photons containing $p$.
\end{defn}
In other words, $\econe{p}$ is the set of all points incident to $p$. 
Also\,:
\begin{equation*}
\econe{p}=\pi\left(\vv^\perp\cap\cone\right)
\end{equation*}
where $\vv\in\cone$ is such that $\pi(\vv)=p$.  

The lightcone $\econe{p}$ is a bouquet of circles (photons) that is pinched at $p$; it is homeomorphic to a pinched torus. Figure~\ref{fig:1cone} shows a lightcone.  
\begin{rem}\label{rem:figures}
For the figures in this paper, we identify $\Eint$ with the quotient of $S^2\times S^1$ by the antipodal map.  We remove a copy of $S^1$, specifically the south pole in each copy of $S^2$, which corresponds to the negative $z$-axis in $\rto$.  Then we cut along a copy of $S^2$ corresponding to the plane $z=0$.  This yields a solid cylinder, where the bottom disk is glued to the top disk in such a way that the former's center gets ``mapped'' to the entire edge of the latter, which is really a point belonging to the removed copy of $S^1$.  See~\S\ref{sec:basic} for the parametrization used.
\end{rem}
\begin{figure}
\centerline{\includegraphics[scale=.5]{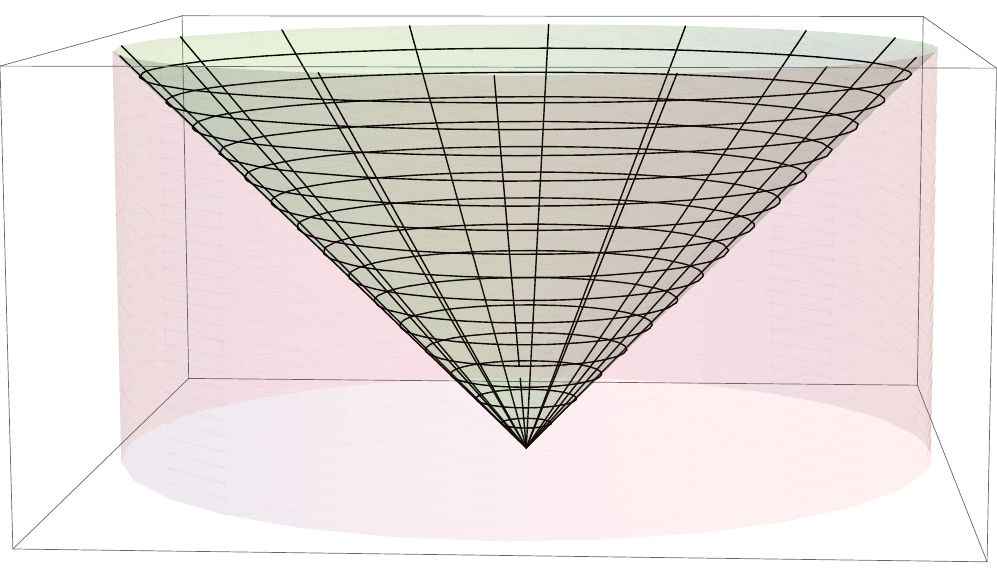}}
\caption{A lightcone in the Einstein Universe.  The point at the bottom is glued to the circle at the top, yielding a pinched torus.  (See Remark~\ref{rem:figures}.)}
\label{fig:1cone}
\end{figure}
Figure~\ref{fig:2cone} shows two lightcones in $\Eint$.  They can be seen to intersect in a simple closed curve.  
\begin{figure}
\centerline{\includegraphics[scale=.5]{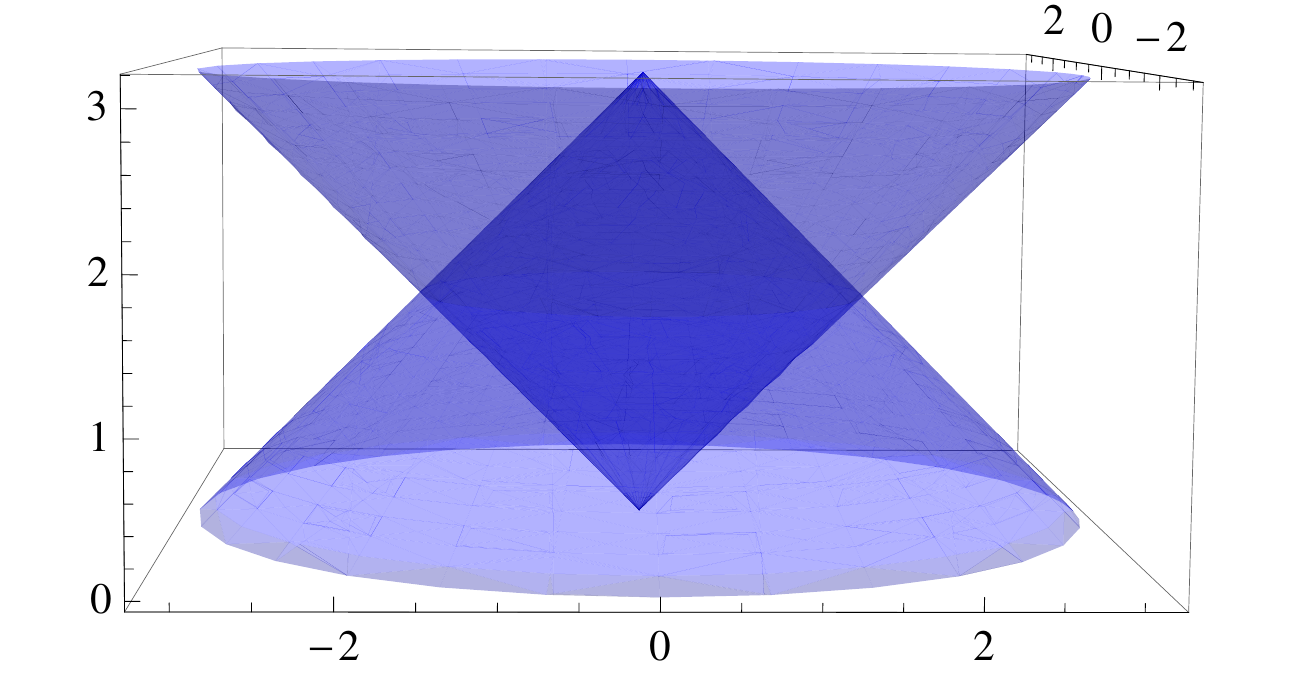}}
\caption{Two lightcones in the Einstein Universe.}
\label{fig:2cone}
\end{figure}
\begin{lemma}\label{lem:twocones}
Suppose $p,q\in\Eint$ are not incident.  Then $\econe{p}\cap\econe{q}$ is a
simple closed curve.  
\end{lemma}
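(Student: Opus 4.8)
The plan is to pass to the linear model in $\rtt$ and reduce the statement to a fact about the null cone of a Lorentzian $3$-space. First I would pick lifts $\vv,\vw\in\cone$ with $\pi(\vv)=p$ and $\pi(\vw)=q$. Non-incidence implies $p\neq q$ (every point lies on a photon through itself), so $\vv$ and $\vw$ are linearly independent, and it also forbids a common photon through $p$ and $q$, so the plane $U=\mathrm{span}(\vv,\vw)$ is not totally isotropic; since $\ldot{\vv,\vv}=\ldot{\vw,\vw}=0$ this means $\ldot{\vv,\vw}\neq0$, and after rescaling $\vw$ we may assume $\ldot{\vv,\vw}=1$. Now a point $r=\pi(\vu)$ lies on $\econe{p}$ precisely when $\ldot{\vu,\vv}=0$ and on $\econe{q}$ precisely when $\ldot{\vu,\vw}=0$, whence
\[
  \econe{p}\cap\econe{q}=\pi\bigl(\vv^\perp\cap\vw^\perp\cap\cone\bigr).
\]
Writing $W=\vv^\perp\cap\vw^\perp$, the task reduces to describing $\pi(W\cap\cone)$.

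Next I would pin down the restricted form on $W$. In the basis $\vv,\vw$ the form on $U$ has Gram matrix $\bigl(\begin{smallmatrix}0&1\\1&0\end{smallmatrix}\bigr)$, which is non-degenerate of signature $(1,1)$; hence $W=U^\perp$, the decomposition $\rtt=U\oplus W$ is orthogonal, and $W$ is non-degenerate. Comparing signatures, $\ldot{,}$ restricts to a form of signature $(2,1)$ on $W$, so in particular $\dim W=3$. Therefore $W\cap\cone$ is exactly the set of nonzero null vectors of a Lorentzian $3$-space --- its punctured null cone --- and it is nonempty, since every form of signature $(2,1)$ admits null vectors.

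Finally I would identify $\pi(W\cap\cone)$ as an embedded simple closed curve. In suitable coordinates on $W$ the null cone is $\{x_1^2+x_2^2=x_3^2\}$; deleting the origin leaves a pair of cones, and quotienting by $\R^*$ (which identifies antipodal rays) produces a circle. The inclusion $W\cap\cone\hookrightarrow\cone$ descends to a map of this circle into $\Eint=\cone/\R^*$ which is injective --- if $\pi(\vu_1)=\pi(\vu_2)$ with $\vu_1,\vu_2\in W\cap\cone$, then $\vu_1=\lambda\vu_2$ for some $\lambda\in\R^*$ --- and an immersion; since the domain is compact, it is an embedding. Hence $\econe{p}\cap\econe{q}$ is a simple closed curve.

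I expect the only step requiring real care to be the signature computation, which rests on the standard fact that the orthogonal complement of a non-degenerate subspace of a non-degenerate quadratic space is again non-degenerate with complementary signature, together with the verification that $\pi(W\cap\cone)$ is genuinely an embedded submanifold of $\Eint$ and not merely a topological circle. Everything else is routine. One could alternatively carry out the argument concretely in the model $S^2\times S^1/\!\sim$, matching Figure~\ref{fig:2cone}, but the coordinate-free computation seems cleaner.
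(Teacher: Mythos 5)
Your proof is correct, and it takes a genuinely different route from the paper's. The paper argues by normalization: using transitivity of the conformal group it assumes $p=p_\infty$ and, after a translation in the Minkowski patch, $q=\iota(0,0,0)$, and then simply exhibits $\econe{p}\cap\econe{q}$ as the explicit ``circle at infinity'' $\{(0:\cos t:\sin t:1:0)\}$. You instead work invariantly in $\rtt$: non-incidence forces $\ldot{\vv,\vw}\neq 0$, so $U=\mathrm{span}(\vv,\vw)$ is a nondegenerate $(1,1)$-plane, $W=U^\perp=\vv^\perp\cap\vw^\perp$ has signature $(2,1)$, and the intersection of the two lightcones is the projectivized null cone of this Lorentzian $3$-space, i.e.\ an embedded circle. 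Your version avoids having to justify the WLOG reduction (transitivity of $\CONF$ on non-incident pairs, which the paper leaves implicit) and makes the equivalence ``non-incident $\Leftrightarrow$ lifts non-orthogonal'' explicit, which is a useful byproduct; the paper's version buys an explicit model of the curve that it immediately reuses in Remark~\ref{rem:twocones} to see that the circle is spacelike. Note that your setup recovers that remark too, with one more line: the tangent direction to the projectivized null cone of $W$ at $[\vu]$ is $(\vu^\perp\cap W)/\R\vu$, which is positive definite, so the circle is spacelike in the conformal structure of $\Eint$. Also, as you suspect, the immersion language is more than you need: injectivity plus compactness of the domain and Hausdorffness of $\Eint$ already give a topological embedding, which is all that ``simple closed curve'' requires.
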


\begin{proof}
Suppose without loss of generality that $p=p_\infty$.  The intersection of $\econe{q}$ with the Minkowski patch $\Eint\setminus\econe{p}$ corresponds to a lightcone in $\rto$.  Applying a translation if necessary, we may suppose that $q=\iota(0,0,0)$.  Then $\econe{p}\cap\econe{q}$ is the so-called circle at infinity:
\begin{equation*}
\econe{p}\cap\econe{q}=\{(0:\cos t:\sin t:1:0)~\mid~t\in\R\}.
\end{equation*}
\end{proof}

\begin{rem}\label{rem:twocones}
The proof of Lemma~\ref{lem:twocones} shows that $\econe{p}\cap\econe{q}$ is spacelike, since the tangent vector at any point of the intersection is spacelike.  The intersection of two such lightcones is called a {\em spacelike circle}.
\end{rem}

\subsection{Einstein torus}\label{sub:torus}

Let $\vv\in\rtt$ be spacelike.  Then the restriction of $\ldot{,}$ to its orthogonal complement $\vv^\perp$ is of signature $(2,2)$.  Its lightcone $\vv^\perp\cap\cone$ projects to a torus in $\Eint$ endowed with a conformal class of metrics of signature $(1,1)$.  It is the conformal compactification of $\R^{1,1}$.

\begin{defn}
An {\em Einstein torus} is the projection $\pi\left(\vv^\perp\cap\cone\right)$, where $\vv\in\rtt$ is spacelike.
\end{defn}
Certain configurations of four points in $\Eint$, called {\em torus data}, induce Einstein tori as follows.  Let\,:
\begin{equation*}
\data=\{p_1,p_2,f_1,f_2\}
\end{equation*}
where\,:
\begin{itemize}
\item $p_1,p_2$ are non-incident;
\item $f_1,f_2\in\econe{p_1}\cap\econe{p_2}$.
\end{itemize}
Let $\vv_1,\vv_2,\vx_1,\vx_2\in\cone$ such
that\,:
\begin{align*}
\vv_i &\in \pi^{-1}(p_i) \\
\vx_i & \in  \pi^{-1}(f_i) .
\end{align*}
The restriction of $\ldot{,}$
endows the subspace of $\rtt$ spanned by the four vectors with a non-degenerate
scalar product of signature $(2,2)$.  Thus it is the orthogonal complement of a spacelike vector in $\rtt$.

%
\section{Crooked surfaces}\label{sec:crookedsurface}

A {\em crooked surface} is an object in the $\SOTT$-orbit of the conformal compactification of  a {\em crooked plane}.  Crooked planes were originally
introduced by Drumm in order to construct fundamental domains for proper affine actions
of free groups  on Minkowski spacetime~\cite{D92}.  They were later generalised
by Frances to study the extension of such actions to $\Eint$~\cite{F03}.

We will start with the definition of a crooked plane, since there is an easily
expressed criterion for disjointness of crooked planes in Minkowski spacetime. 
We will then return to crooked surfaces and give a synthetic description of
these objects in $\Eint$.

\subsection{Crooked planes}
Here is a very brisk introduction to crooked planes.  The reader
interested in more details might consult~\cite{BCDG,DG99}.

Recall that $\E$ denotes the three-dimensional affine space modeled on $\rto$.  To
distinguish between the affine space and the vector space, we will use $o,p$ to
denote points in $\E$ and $\vu,\vx$ to denote vectors in $\rto$.  Given a vector
$\vu\in\rto$, we will denote by $\vu^\perp$ its Lorentz-orthogonal plane\,:
\begin{equation*}
\vu^\perp=\{ \vx\in\rto~\mid~\vu\cdot\vx=0\}.
\end{equation*}

If $\vu\in\rto$ is spacelike, $\vu^\perp$ intersects the lightcone
in two rays.  Let $\xm{\vu}$, $\xp{\vu}$ be a pair of future-pointing null
vectors in $\vu^\perp$ such that $\{ \vu,\xm{\vu},\xp{\vu}\}$ is a positively
oriented basis of $\rto$.

\begin{rem}
There is in fact a ray of possible choices for $\xp{\vu}$, as well as for
$\xm{\vu}$, none of which are more natural than another.  We will typically
write $\vx=\xpm{\vu}$ to mean that they are parallel or, more accurately, that
$\vx$ {\em could be} chosen to be $\xpm{\vu}$.  (But $\vx$ must be future-pointing.)
\end{rem}

If $\vx\in\rto$ is null, then $\vx^\perp$ is a plane, tangent to the
lightcone, and $\vx^\perp\setminus\R\vx$ consists of two connected
components.  In particular, for $\vu$ spacelike, $\vu$ lies in one of these two
components for both $\xm{\vu}$ and $\xp{\vu}$.

\begin{defn}
Let $\vx\in\rto$ be a future-pointing null vector.  Then the closure of the halfplane\,:
\begin{equation*}
\wing{\vx}=\{ \vu\in\vx^\perp~\mid~\vx=\xp{\vu}\}
\end{equation*}
is called a {\em positive linear wing}.

Let $p\in\E$.  A {\em positive wing} is a closed halfplane $p+\wing{\vx}$.
\end{defn}

\begin{rem}
One obtains a {\em negative wing} by choosing the other connected
component of $\vx^\perp\setminus\R\vx$.  In~\cite{BCDG}, positive wings are called ``wings'' and positive linear wings are called ``null halfplanes'', since only positive objects are considered there.
\end{rem}

Observe that if $\vu\in\rto$ is spacelike\,:
\begin{align*}
\vu &\in \wing{\xp{\vu}} \\
-\vu  &\in\wing{\xm{\vu}} \\
\wing{\xp{\vu}}&\cap\wing{\xm{\vu}}  =\{\vo\}.
\end{align*}
The set of positive linear wings is $\SOTO$-invariant.

\begin{defn}
Let $\vu\in\rto$ be spacelike.  Then the following set\,:
\begin{equation*}
\stem{\vu}=\{ \vx\in\vu^\perp~\mid~\vx\cdot\vx\leq 0\}
\end{equation*}
is called a {\em linear stem}.  

Let $p\in\E$.  A {\em
stem} is a set $p+\stem{\vu}$.
\end{defn}
Observe that $\R\xp{\vu}$ and $\R\xm{\vu}$ bound $\stem{\vu}$; 
the closures of each linear wing $\wing{\xp{\vu}}$ and $\wing{\xm{\vu}}$ respectively intersect the linear stem in
these lines.

\begin{defn}
Let $\vu\in\rto$ be spacelike and let $p\in\E$.  The {\em positively extended
crooked plane} with {\em vertex} $p$ and {\em director} $\vu$ is the
union of\,:
\begin{itemize}
\item the stem $p+\stem{\vu}$;
\item the positive wing $p+\wing{\xp{\vu}}$;
\item the positive wing $p+\wing{\xm{\vu}}$.
\end{itemize}
It is denoted $\CP{p,\vu}$.
\end{defn}

Figure~\ref{fig:cp1} shows a crooked plane.
\begin{figure}
\centerline{\includegraphics[scale=.3]{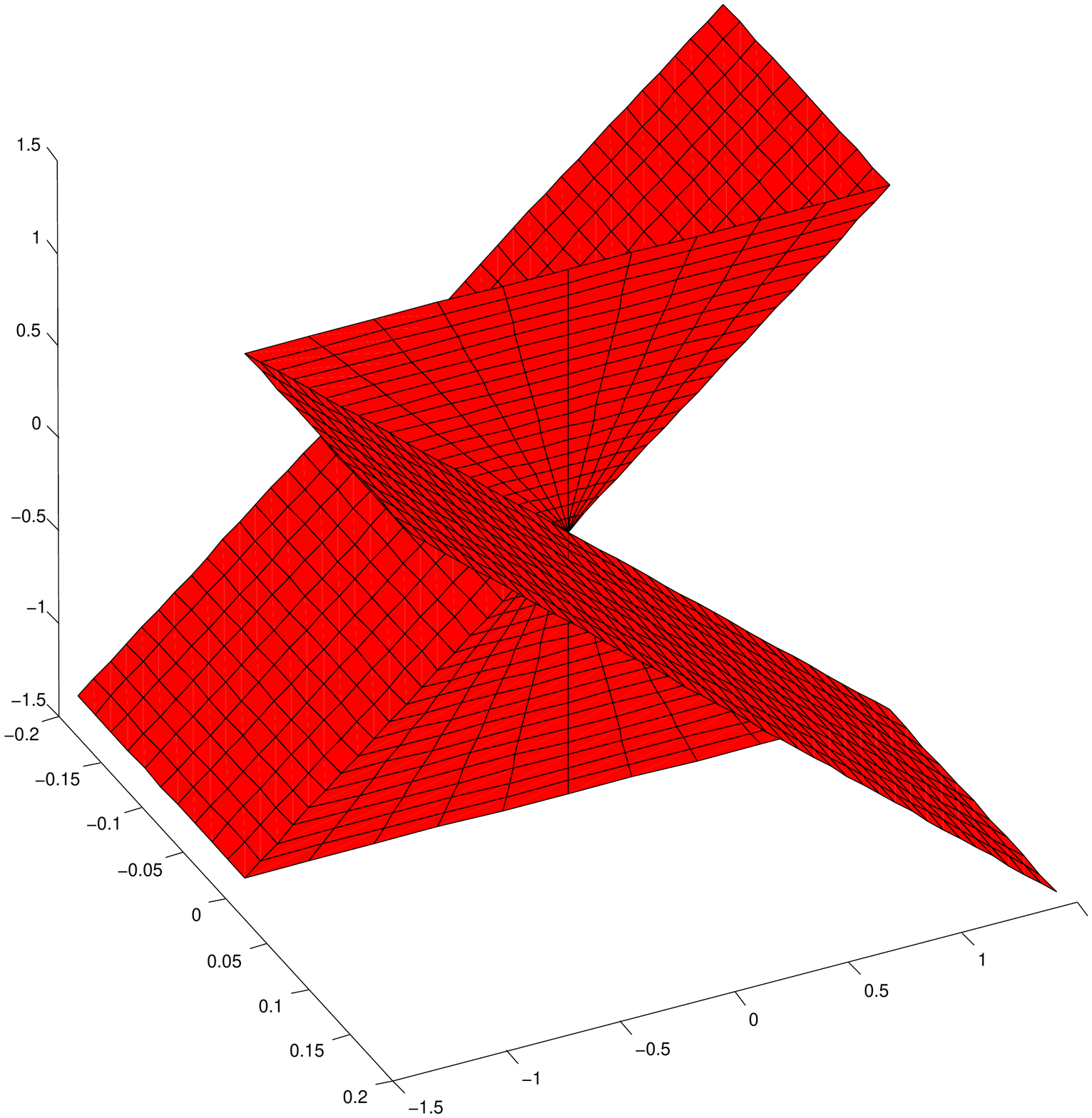}}
\caption{A crooked plane in $\E$.}
\label{fig:cp1}
\end{figure}

\begin{rem}
A negatively extended crooked plane is obtained by replacing positive wings with
negative wings.  We can avoid discussing negatively extended crooked planes,
even though we will make use of their compactifications in~\S\ref{sec:negative}.
 Thus we will simply write ``wing'' to mean positive wing and ``crooked plane"
to mean positively extended crooked plane, until further notice.
\end{rem}

\subsection{Crooked halfspaces and disjointness} 

The complement of a crooked plane $\CP{p,\vu}\in\E$ consists of two crooked halfspaces, respectively corresponding to $\vu$ and $-\vu$.  A crooked halfspace will be determined by the appropriate {\em stem quadrant}, which we introduce next.  In~\cite{BCDG}, the stem quadrant depends on the crooked halfspace, rather than the point-vector pair determining the crooked halfspace as we do here.  But in both cases, a stem quadrant is an affine set determining a unique crooked halfspace.

In what follows, $\int{X}$ denotes the interior of $X$ and $\cl{X}$, the closure of $X$.

\begin{defn}
Let $\vu\in\rto$ be spacelike and let $p\in\E$.  The associated {\em stem quadrant} is\,:
\begin{equation*}
 \Quad{p,\vu}=p+\{ a\xm{\vu}-b\xp{\vu}~\mid~a,b\geq 0\}.
\end{equation*}
 \end{defn}

The stem quadrant $\Quad{p,\vu}$ is bounded by light rays parallel to $\xm{\vu}$ and
$-\xp{\vu}$.

\begin{defn}
Let $\vu\in\rto$ be spacelike and let $p\in\E$.  The {\em crooked halfspace}
$\CHS{p,\vu}$ is the component of the
complement of 
$\CP{p,\vu}$ containing $\int{\Quad{p,\vu}}$.
\end{defn}
By definition, crooked halfspaces are open.  While the crooked planes
$\CP{p,\vu}$, $\CP{p,-\vu}$ are equal, 
the crooked halfspaces $\CHS{p,\vu}$, $\CHS{p,-\vu}$ are disjoint, sharing $\CP{p,\vu}$ as a common boundary.

We will describe disjoint configurations of crooked planes by means of {\em allowable translations} and {\em consistent orientations}.  These are in fact linear notions.  In keeping with the treatment in~\cite{BCDG}, our definitions will be cast in affine terms as well.  We will choose a point $o\in\E$, but the choice of $o$ is completely arbitrary.

\begin{defn}
 Let $\vu_1,\vu_2\in\rto$ be spacelike and choose $o\in\E$.  The vectors $\vu_1,\vu_2$ are said to be
{\em consistently oriented} if $\cl{\CHS{o,\vu_1}}$ and $\cl{\CHS{o,\vu_2}}$ intersect only in $o$.
\end{defn}

The original, equivalent definition, due to Drumm-Goldman~\cite{DG99}, requires among other things that
$\vu_1^\perp\cap\vu_2^\perp$ be spacelike.  Such vectors are called {\em ultraparallel\/}.

Next we describe the displacement vector between vertices of disjoint crooked planes.   This is called an {\em allowable translation} in~\cite{BCDG} and earlier papers.  Here, we really want the decomposition of such an allowable translation as a pair of vectors in the respective stem quadrants and so we introduce a slight modification.
\begin{defn}
Let $\vu_1,\vu_2\in\rto$ be a pair of consistently oriented 
spacelike
vectors.  Choose $o\in\E$ and set\,: 
\begin{equation*}
O=\left(\Quad{o,\vu_1}-o\right)\times\left(\Quad{o,\vu_2}-o\right)\subset\rto\times\rto.
\end{equation*}

The set of {\em allowable pairs} for $\vu_1,\vu_2$ is\,:
\begin{equation*}
 \allow{\vu_1,\vu_2}=\{(\vz_1,\vz_2)\in O~\mid~\vz_1-\vz_2\in \int{\Quad{o,\vu_1}-\Quad{o,\vu_2}}\}.
\end{equation*}
\end{defn}

\begin{rem}
In the case where $\vu_1,\vu_2$ are ultraparallel, $(\vz_1,\vz_2)\in\allow{\vu_1,\vu_2}$ if either $\vz_1$ or $\vz_2$ is parallel to an edge of the relevant stem quadrant, but not both.  Considering $O$ as an orthant in $\R^4$, then $\allow{\vu_1,\vu_2}$ corresponds to the entire polyhedron, minus all but a pair of dimension two faces.  But in the {\em asymptotic} case, where $\vu_1^\perp\cap\vu_2^\perp$ is null, then neither $\vz_1$ nor $\vz_2$ can be parallel to the common edge of the stem quadrants.  
\end{rem}

Drumm and Goldman~\cite{DG99} proved a criterion for disjointness of crooked planes in terms of allowable translations.  This criterion is strengthened in~\cite{BCDG} and we adapt the latter formulation to the language of allowable pairs.

\begin{thm}
\cite{BCDG}
\label{thm:strong}
 Let $\vu_1,\vu_2\in\rto$ be a pair of consistently oriented 
spacelike
vectors.  Let $p\in\E$ and let $\vz_1,\vz_2\in\rto$ such that $p+\vz_i\in\Quad{p,\vu_i}$, for $i=1,2$.   Then\,:
\begin{equation*}
 \CP{p+\vz_i,\vu_i}\subset \cl{\CHS{p,\vu_i}}.
\end{equation*}
Moreover, if 
$(\vz_1,\vz_2)\in\allow{\vu_1,\vu_2}$ then\,:
\begin{equation*}
 \CP{p+\vz_1,\vu_1}\cap \CP{p+\vz_2,\vu_2}=\emptyset.
\end{equation*}
\end{thm}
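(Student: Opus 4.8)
The plan is to reduce the statement to a claim purely about linear objects --- stems, wings, and stem quadrants based at the origin --- and then to handle the two assertions separately. For the first assertion, that $\CP{p+\vz_i,\vu_i}\subset\cl{\CHS{p,\vu_i}}$, I would first observe that translating the whole picture by $-p$ and replacing $\vz_i$ by an arbitrary representative, it suffices to show: if $\vz\in\Quad{o,\vu}-o$ then $\CP{o+\vz,\vu}\subset\cl{\CHS{o,\vu}}$. This is a convexity-type statement about a single crooked plane. The key tool is the description, available in the cited literature but reprovable directly, of $\cl{\CHS{o,\vu}}$ as a union over $t$ in the stem quadrant of the translated crooked planes $\CP{o+t,\vu}$, or equivalently via an explicit system of inequalities in the coordinates adapted to the basis $\{\vu,\xm{\vu},\xp{\vu}\}$. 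Concretely, I would write down the defining inequalities for $\CHS{o,\vu}$ on each of the three pieces (stem, positive wing along $\xp{\vu}$, positive wing along $\xm{\vu}$) and check that translating the vertex by any $\vz=a\xm{\vu}-b\xp{\vu}$ with $a,b\ge 0$ keeps each piece of $\CP{o+\vz,\vu}$ inside the closed halfspace: on the wings this is because $\xm{\vu}$ and $-\xp{\vu}$ point ``into'' $\cl{\CHS{o,\vu}}$, and on the stem one uses that the stem quadrant is precisely the set of displacements preserving the nesting.

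For the second assertion --- disjointness when $(\vz_1,\vz_2)\in\allow{\vu_1,\vu_2}$ --- I would leverage the ambient Theorem~\ref{thm:strong} reference to the known Drumm--Goldman/BCDG machinery: by the first part, $\CP{p+\vz_1,\vu_1}\subset\cl{\CHS{p,\vu_1}}$ and $\CP{p+\vz_2,\vu_2}\subset\cl{\CHS{p,\vu_2}}$, so it would suffice to show that these two closed crooked halfspaces meet only in the single vertex $p$ --- except that this is false in general for a common vertex, which is exactly why the allowable-pair hypothesis is needed. The correct route is: translate so $p=o$; since $\vu_1,\vu_2$ are consistently oriented, $\cl{\CHS{o,\vu_1}}\cap\cl{\CHS{o,\vu_2}}=\{o\}$. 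Now I want to ``pull apart'': show that $\CP{o+\vz_1,\vu_1}$ lies in the \emph{open} halfspace $\CHS{o,\vu_1}$ away from a controlled neighborhood of $o$, with the defect measured exactly by how far $\vz_1$ is from the boundary edges of $\Quad{o,\vu_1}$, and symmetrically for the index $2$. The condition $\vz_1-\vz_2\in\int(\Quad{o,\vu_1}-\Quad{o,\vu_2})$ is precisely the statement that the ``slacks'' on the two sides are compatible, i.e. that one can find a hyperplane, or more precisely a common separating datum, showing $\CP{o+\vz_1,\vu_1}$ and $\CP{o+\vz_2,\vu_2}$ cannot share a point. I would make this precise by supposing a common point $q$ exists, writing $q\in\cl{\CHS{o,\vu_1}}$ and $q\in\cl{\CHS{o,\vu_2}}$, deducing $q=o$ from consistent orientation, and then checking that $o\in\CP{o+\vz_1,\vu_1}$ forces $\vz_1$ to lie on an edge of $\Quad{o,\vu_1}$ (and similarly $\vz_2$ on an edge of $\Quad{o,\vu_2}$), which together with $\vz_1-\vz_2\in\int(\Quad{o,\vu_1}-\Quad{o,\vu_2})$ yields a contradiction via the remark characterizing $\allow{\vu_1,\vu_2}$.

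The main obstacle I anticipate is the bookkeeping in the second step: one genuinely needs the quantitative ``how deep inside the halfspace'' estimate, not just the set-theoretic containment from part one, and matching that estimate to the combinatorial condition defining $\allow{\vu_1,\vu_2}$ (which faces of the orthant $O$ are allowed) requires care about the three different cases --- ultraparallel with one of $\vz_1,\vz_2$ on an edge, and the asymptotic case where neither can be on the common edge. I would organize this by choosing coordinates in which $\vu_1^\perp\cap\vu_2^\perp$ is spanned by explicit vectors, reducing to a two-dimensional problem in the plane transverse to that intersection, where the crooked halfspaces become ordinary (convex) sectors and the disjointness becomes an elementary statement about displaced sectors in the plane. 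Routing the argument through the already-cited BCDG formulation, one may instead simply invoke their disjointness criterion and check that the allowable-pair hypothesis implies their allowable-translation hypothesis, which is a short verification; I would present that as the clean path and relegate the coordinate computation to a remark.
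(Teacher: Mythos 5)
The paper itself offers no proof of Theorem~\ref{thm:strong}: it is quoted from~\cite{BCDG}, and the surrounding text merely recasts the hypothesis in the language of allowable pairs. So the ``clean path'' you mention at the end (verify that an allowable pair satisfies the disjointness criterion of~\cite{BCDG} and invoke it) is exactly what the paper does, while your direct argument is a genuinely different, self-contained route. Its skeleton is sound: (i) the first assertion reduces to the linear statement that translating the vertex by a stem-quadrant vector keeps the crooked plane inside the closed halfspace, which is the nesting property from~\cite{BCDG} and can indeed be verified piece by piece in the basis $\{\vu,\xm{\vu},\xp{\vu}\}$; (ii) given (i), consistent orientation, as defined in this paper, forces any common point of $\CP{o+\vz_1,\vu_1}$ and $\CP{o+\vz_2,\vu_2}$ to be $o$; (iii) your observation that $o\in\CP{o+\vz,\vu}$ forces $\vz$ onto an edge of the stem quadrant is correct: writing $\vz=a\xm{\vu}-b\xp{\vu}$ with $a,b>0$, the vector $-\vz$ is spacelike and orthogonal to neither $\xp{\vu}$ nor $\xm{\vu}$, hence lies in neither the stem nor a wing. (Your aside that the ``meet only in the vertex'' statement is ``false in general'' is misplaced --- it is precisely the definition of consistent orientation; what is true is that it alone does not exclude the vertex as a common point, which is why the allowable-pair hypothesis enters.)

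The one step you cannot discharge by citing the Remark is the final contradiction: you need that if \emph{both} $\vz_1$ and $\vz_2$ lie on edges of their stem quadrants, then $\vz_1-\vz_2\notin\int{\Quad{o,\vu_1}-\Quad{o,\vu_2}}$. The Remark asserts a version of this but is itself an unproven gloss on the definition, so leaning on it is circular, and the implication is not formal: the difference cone is generated by the four rays through $\xm{\vu_1}$, $-\xp{\vu_1}$, $-\xm{\vu_2}$, $\xp{\vu_2}$, and a positive combination of one ray from each quadrant lies in the boundary only if those two rays span a common face. This is where consistent orientation must be used a second time. The Drumm--Goldman inequalities $\vu_1\cdot\xpm{\vu_2}\le 0$ and $\vu_2\cdot\xpm{\vu_1}\le 0$ say exactly that $\vu_1^\perp$ separates $-\xm{\vu_2}$ from $\xp{\vu_2}$ and that $\vu_2^\perp$ separates $\xm{\vu_1}$ from $-\xp{\vu_1}$; consequently the cross-section of the cone is a quadrilateral whose diagonals are spanned by $\{\xm{\vu_1},-\xp{\vu_1}\}$ and $\{-\xm{\vu_2},\xp{\vu_2}\}$, so every ``one edge from each quadrant'' pair spans a boundary face (in the asymptotic case the cross-section degenerates to a triangle and the same conclusion holds). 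Without this facial analysis the adjacency could a priori go the other way and your contradiction would evaporate, so this lemma must be stated and proved, not inferred from the Remark. Finally, your proposed reduction to ``convex sectors in the plane transverse to $\vu_1^\perp\cap\vu_2^\perp$'' does not literally apply to crooked halfspaces (they are not translation-invariant along that line), but it is also unnecessary: the only computation your argument actually needs is the linear one about the cone just described.
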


\subsection{Crooked surfaces as conformal compactifications of crooked planes}

Recall from~\S\ref{sec:conformal} that $\iota(\E)$ consists of the
complement of $\econe{p_\infty}$, where $p_\infty=\left(-1:0:0:0:1\right)$.

Let $\vu\in\rto$ be spacelike and let $p\in\E$.  The crooked plane $\CP{p,\vu}$
admits a 
conformal compactification, which we denote by $\conf{\CP{p,\vu}}$. 
Explicitly\,: 
\begin{equation*}
 \conf{\CP{p,\vu}}=\iota(\CP{p,\vu})\cup\phi_\infty\cup\psi_\infty
\end{equation*}
where\,:
\begin{itemize}
 \item $\phi_\infty\subset\econe{p_\infty}$ is the photon containing
$(0:\xp{\vu}:0)$;
  \item $\psi_\infty\subset\econe{p_\infty}$ is the photon containing
$(0:\xm{\vu}:0)$.
\end{itemize}
The pair of photons $\{\phi_\infty,\psi_\infty\}$ forms the ``scaffolding'' of the crooked surface with another pair of photons $\{\phi_p,\psi_p\}$, defined as follows\,: 
\begin{align*}
\phi_p & = \cl{\iota\left(p+\R\xp{\vu}\right)}= \iota\left(p+\R\xp{\vu}\right)\cup\left\{\left(-\vp\cdot\xp{\vu}:\xp{\vu}:\vp\cdot\xp{\vu}\right)\right\}\\
\psi_p & =\cl{\iota\left(p+\R\xm{\vu}\right)}= \iota\left(p+\R\xm{\vu}\right)\cup\left\{\left(-\vp\cdot\xm{\vu}:\xm{\vu}:\vp\cdot\xm{\vu}\right)\right\}
\end{align*}
where $\vp=p-(0,0,0)$ as in Equation~\eqref{eq:vecp}.  We observe the following intersections\,:
\begin{align*}
\phi_\infty\cap\phi_p &= \left\{\left(-\vp\cdot\xp{\vu}:\xp{\vu}:\vp\cdot\xp{\vu}\right)\right\} \\
\psi_\infty\cap\psi_p & = \left\{\left(-\vp\cdot\xm{\vu}:\xm{\vu}:\vp\cdot\xm{\vu}\right)\right\}.
\end{align*}
See Figure~\ref{fig:cp}.
\begin{figure}
\centerline{\includegraphics[scale=.5]{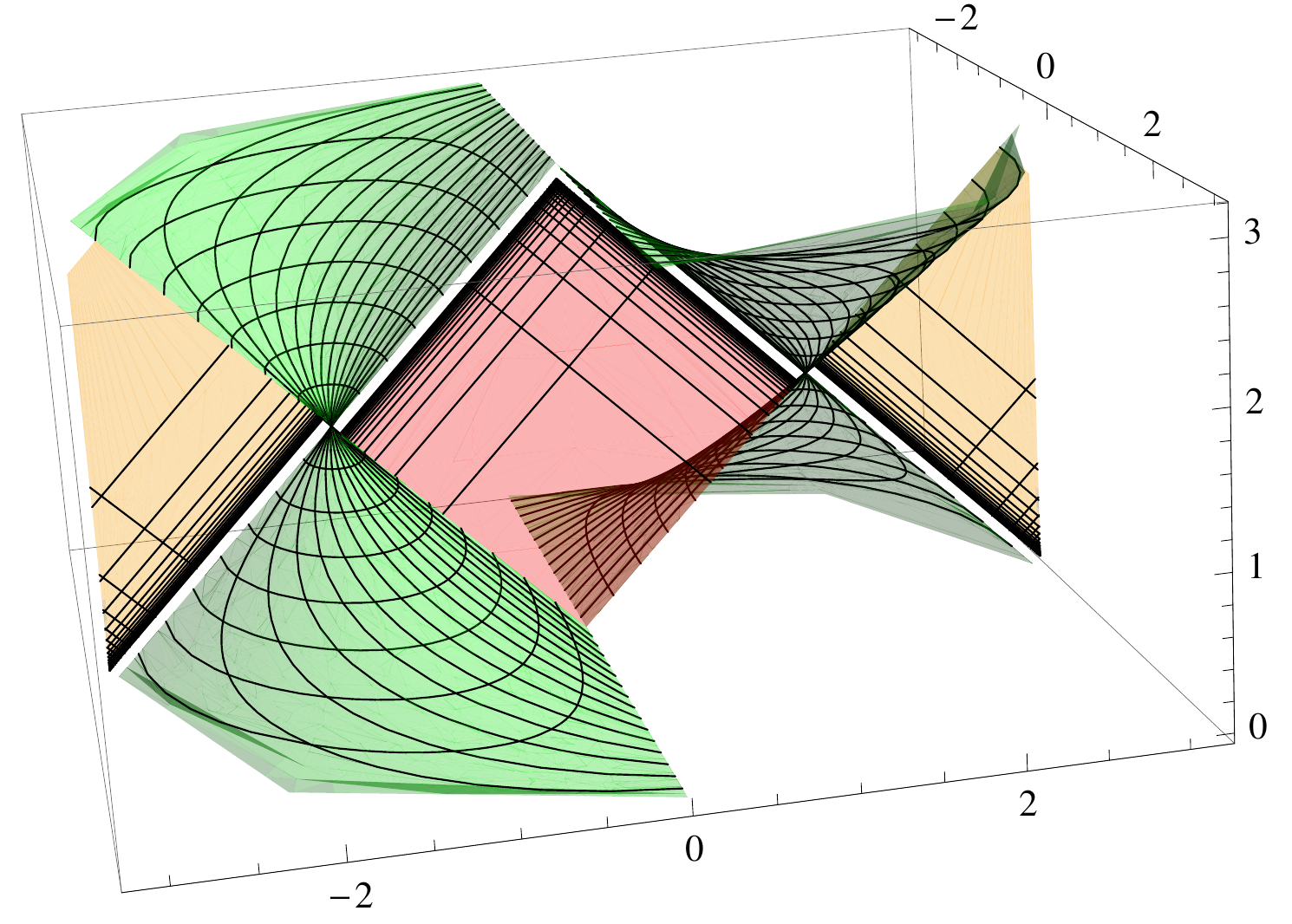}}
\caption{A crooked surface.  The `` flat'' pieces form the stem. The stem appears to be cut in half, because of the removal of a circle (see Remark~\ref{rem:figures}).  The remaining two pieces are the wings.}
\label{fig:cp}
\end{figure}

The compactification of the wing $p+\wing{\xp{\vu}}$ is in fact a ``half lightcone''.  For instance, if $p=(0,0,0)$, then\,:
\begin{equation*}
\phi_p\subset\econe{0:\xp{\vu}:0}.
\end{equation*}
There are two connected components in $\econe{0:\xp{\vu}:0}\setminus\left(\phi_\infty\cup\phi_p\right)$, one of which contains the interior of $\iota\left(p+\wing{\xp{\vu}}\right)$.  The analogous statement holds for  $p+\wing{\xm{\vu}}$ as well.

\begin{defn}
Let $o\in\E$ and $\vu\in\rto$ be spacelike.  A {\em crooked surface} is any
element in the $\SOTT$-orbit of $\conf{\CP{o,\vu}}$.
\end{defn}

\begin{rem}
The conformal compactifications of negatively extended
crooked planes also lie in the $\SOTT$-orbit of $\conf{\CP{o,\vu}}$.  Restricting to the 
connected component of the identity of $\SOTT$, however, yields only what
one could 
call a ``positively extended crooked surface''. 
\end{rem}

\subsection{A basic example}\label{sec:basic}

We will describe  $\CS=\conf{\CP{o,\vu}}$, where $o=(0,0,0)$ and $\vu=(1,0,0)$.  For the remainder of the paper, set\,:
\begin{equation*}
p_0=\iota(o)=(1:0:0:0:1).
\end{equation*}
Note also that\,: 
\begin{equation*}
\iota(\vu)=(0:1:0:0:1).
\end{equation*}

Identifying $\Eint$ with a quotient of $S^2\times S^1$, consider $\Eint$ with the following (non-injective) parametrization\,:
\begin{align}\label{param}
( \cos \phi :\sin\phi\cos\theta&:\sin\phi\sin\theta:\sin t:\cos t) \\
\notag 0&\leq \phi\leq\pi \\
\notag 0&\leq \theta \leq 2\pi \\
\notag 0&\leq t \leq\pi
\end{align}
where $t=0$ is glued to $t=\pi$ by the antipodal map.  In our permuted version of the usual parametrizations of $S^2$ and $S^1$, the point $p_0$ corresponds to $t=\phi=0$ (with $\theta$ arbitrary).  

Since $\xpm{\vu}=(0,\mp 1,1)$, the compactification of $\iota\left(\vu^\perp\right)$ is the Einstein torus with torus data $\{ p_0,p_\infty,f_1,f_2\}$, where \,:
\begin{align*}
f_1 &=(0:0:1:1:0)\\
f_2 &=(0:0:-1:1:0).
\end{align*}
This is the torus\,:
\begin{equation*}
 \pi\left((0,1,0,0,0)^\perp\cap\cone\right)
\end{equation*}
which corresponds to $\theta=\pm\pi/2$ in the parametrization~\eqref{param}.  We obtain the stem by restricting $\phi$ to lie outside of the open interval bounded by $t$ and $\pi/2-t$.

%
%
The wing $o+\wing{\xm{\vu}}$ projects into $\econe{0:0:1:1:0}$.  Compactifying yields the half lightcone bounded by the pair of photons $\phi=t$ and $\phi=\pi/2-t$, containing the point $(0:-1:0:0:1)$. Its intersection with the sphere $t=t_0$ is a circle, with center $(0:0:1:1:0)$ and (spherical) radius $\pi/2-t_0$.

In the same manner, the wing $o+\wing{\xp{\vu}}$ corresponds to the half lightcone in $\econe{0:0:-1:1:0}$, bounded by the pair of photons $\phi=t$ and $\phi=\pi/2-t$ and containing the point $(0:1:0:0:1)$. 

Alternatively, parametrize the wing $o+\wing{\xm{\vu}}$ as follows\,:
\begin{align*}
(\sin s\cos t  :\sin s\sin t&:\cos s:\cos s:-\sin s) \\
 - \frac{\pi}{2} &\leq s\leq \frac{\pi}{2}\\
 0 &\leq t\leq 2\pi.
\end{align*}
Now photons correspond to $t=\mbox{constant}$.  The wing is the half lightcone $0\leq t\leq\pi$.

Given that all crooked surfaces are conformally equivalent to the basic example, the following two theorems are easily verified.  (See also Figure~\ref{fig:cp}.)

\begin{thm}
 A crooked surface is a surface homeomorphic to a Klein bottle.
\end{thm}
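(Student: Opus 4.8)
The plan is to exploit the fact that all crooked surfaces lie in a single $\SOTT$-orbit, so it suffices to verify the claim for the basic example $\CS = \conf{\CP{o,\vu}}$ with $o=(0,0,0)$ and $\vu=(1,0,0)$, using the explicit parametrization~\eqref{param}. First I would assemble $\CS$ from its three pieces as described in the excerpt: the stem, which is the part of the Einstein torus $\pi\left((0,1,0,0,0)^\perp\cap\cone\right)$ (given by $\theta=\pm\pi/2$) with $\phi$ restricted to lie outside the open interval between $t$ and $\pi/2-t$; and the two compactified wings, each a ``half lightcone'' bounded by the photons $\phi=t$ and $\phi=\pi/2-t$, one containing $(0:-1:0:0:1)$ and the other containing $(0:1:0:0:1)$. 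The scaffolding photons $\{\phi_\infty,\psi_\infty,\phi_p,\psi_p\}$ are where these pieces are glued together.

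The core of the argument is then to show that $\CS$ is a closed surface (a $2$-manifold without boundary) and to compute an invariant distinguishing it from the torus. For the manifold structure, I would check that $\CS$ is covered by finitely many charts: away from the scaffolding photons each of the three pieces is manifestly a smooth surface, and along each scaffolding photon two pieces meet; one verifies that in suitable conformal coordinates near such a photon the union of the two adjacent half-surfaces is again a topological disk times an interval, so no boundary is created and no singularity appears. (The pinch points of the half-lightcones need special attention, but each such point lies on a scaffolding photon and the local picture there is again that of a disk.) To see it is a Klein bottle rather than a torus, I would exhibit $\CS$ as a union of two annuli (the two wings, each being a half-lightcone, hence an annulus $S^1 \times [0,\pi]$ in the parametrization $(\sin s\cos t:\sin s\sin t:\cos s:\cos s:-\sin s)$) glued along the stem, and track orientations: the stem is a single annular band, and the gluing identifications coming from the structure of $\Eint = S^2\times S^1/\!\!\sim$ reverse orientation (mirroring the non-orientability of $\Eint$ itself, as noted in Section~\ref{sec:einstein}). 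Concretely, computing $H_1(\CS;\Z/2)$ or the Euler characteristic via a CW-decomposition with cells the pieces and the scaffolding photons should give $\chi(\CS)=0$ with $\CS$ non-orientable, pinning it down as the Klein bottle; alternatively one checks $\CS$ contains a Möbius band (e.g. a neighborhood of a scaffolding photon sitting inside a wing, since a photon is orientation-reversing in $\Eint$) which rules out the torus.

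I expect the main obstacle to be the verification that $\CS$ is genuinely a manifold across the scaffolding photons and, especially, at the pinch points of the half-lightcones: one must produce honest coordinate charts there and confirm that the apparent ``cut'' in Figure~\ref{fig:cp} (an artifact of removing a circle, per Remark~\ref{rem:figures}) does not reflect a real boundary, and that the two wing-pieces glue to the stem without creating an edge or a cone point. Once the local model at these loci is understood, the orientation bookkeeping for the Klein-bottle conclusion is routine; I would package it by noting that $\CS$ deformation retracts onto a graph (its scaffolding plus one core circle per piece) and reading off $\pi_1$, or simply by the $\chi=0$-plus-non-orientable classification of closed surfaces.
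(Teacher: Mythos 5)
Your overall plan --- reduce to the basic example by conformal equivalence, verify that $\CS$ is a closed surface by local charts along the scaffolding, then pin it down by $\chi=0$ plus non-orientability --- is the right shape, and is essentially the cut-and-paste argument that the paper simply delegates to~\cite{BCDGM}. But your description of the pieces is wrong in a way that undermines both steps. A compactified wing is \emph{not} an annulus: its two bounding photons ($\psi_p,\psi_\infty$, respectively $\phi_p,\phi_\infty$) meet at the vertex $f_1$ (resp.\ $f_2$) of the lightcone containing it; in the parametrization $(\sin s\cos t:\sin s\sin t:\cos s:\cos s:-\sin s)$, $0\le t\le \pi$, the whole segment $s=0$ is collapsed to the single point $(0:0:1:1:0)=f_1$. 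So a closed wing is a pinched half-lightcone, which near $f_1$ is two half-disks wedged at a common boundary point --- by itself it is not even a surface-with-boundary there. Likewise the stem is not ``a single annular band'': already in $\E$ the stem $\stem{\vu}$ is the causal cone of a Lorentzian plane, i.e.\ two quadrants meeting only at the vertex, and its compactification consists of two diamonds of the Einstein torus whose closures meet exactly at the four points $p_0,p_\infty,f_1,f_2$. Hence ``two annuli glued along an annular stem'' is not a valid decomposition, and the crux of the manifold verification is precisely at $p_0,p_\infty,f_1,f_2$, where two stem quadrants and two wing half-disks must be shown to fit together cyclically into a single disk; your sentence ``the local picture there is again that of a disk'' is the statement to be proved and cannot be checked within one piece alone.

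The non-orientability step is also not established. That $\Eint$ is non-orientable, or that a photon is an orientation-reversing loop in $\Eint$, does not imply that a band around a scaffolding photon \emph{inside} $\CS$ is a M\"obius band: the ambient orientation-reversal can be carried entirely by the normal direction to the surface. (An Einstein torus contains photons and is an orientable surface sitting in the same non-orientable $\Eint$, so your argument would prove too much.) To separate the Klein bottle from the torus you must actually track the identifications in a concrete model, e.g.\ the CW structure with vertices $p_0,p_\infty,f_1,f_2$, the eight photon arcs as edges, and the two stem diamonds plus the two wing $2$-cells as faces: this does give $\chi(\CS)=0$, but one must then exhibit an orientation-reversing cycle of face identifications, which is exactly the cut-and-paste content of~\cite{BCDGM} that the paper invokes. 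As written, your argument would not distinguish the Klein bottle from the torus.
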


\begin{proof}
This is a simple cut-and-paste argument, as can be found in~\cite{BCDGM}.
\end{proof}

\begin{thm}
 A crooked surface separates $\Eint$.
\end{thm}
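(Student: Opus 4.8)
The plan is to reduce the statement to the corresponding fact about crooked planes in Minkowski spacetime, using the fact that $\Eint$ is the one-point-compactification-style conformal compactification of $\E$ with the lightcone $\econe{p_\infty}$ glued on. Recall from the excerpt that a crooked plane $\CP{o,\vu}$ separates $\E$ into the two crooked halfspaces $\CHS{o,\vu}$ and $\CHS{o,-\vu}$; this is the affine analogue we will leverage. Since every crooked surface lies in the $\SOTT$-orbit of the basic example $\CS=\conf{\CP{o,\vu}}$ and $\SOTT$ acts by homeomorphisms of $\Eint$, it suffices to show that $\CS$ separates $\Eint$.

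First I would describe $\Eint\setminus\CS$ concretely. Writing $\CS=\iota(\CP{o,\vu})\cup\phi_\infty\cup\psi_\infty$, the complement decomposes as
\begin{equation*}
\Eint\setminus\CS=\bigl(\iota(\E)\setminus\iota(\CP{o,\vu})\bigr)\cup\bigl(\econe{p_\infty}\setminus(\phi_\infty\cup\psi_\infty)\bigr).
\end{equation*}
The first piece is $\iota(\CHS{o,\vu})\sqcup\iota(\CHS{o,-\vu})$, two disjoint open sets. The second piece is the lightcone at $p_\infty$ with two of its photons removed; since $\econe{p_\infty}$ is a bouquet of photons pinched at $p_\infty$, removing the two photons $\phi_\infty,\psi_\infty$ (which both pass through $p_\infty$) disconnects it. The key claim is that the remaining pieces of $\econe{p_\infty}$ attach to the two crooked halfspaces so as to produce exactly two connected components of $\Eint\setminus\CS$, each open, with $\CS$ as their common frontier. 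Concretely, in the basic example one checks using the parametrization~\eqref{param} which arcs of $\econe{p_\infty}$ limit onto $\iota(\CHS{o,\vu})$ versus $\iota(\CHS{o,-\vu})$: a photon of $\econe{p_\infty}$ minus $p_\infty$ is a single arc whose two ends approach the ``wing'' directions at infinity, and one traces that it lies in the closure of one specific crooked halfspace. This is the computational heart of the argument and is most cleanly done in the explicit coordinates already set up in~\S\ref{sec:basic}, exploiting that the wings compactify to half-lightcones bounded by $\phi_\infty\cup\phi_o$ and $\psi_\infty\cup\psi_o$.

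The main obstacle I anticipate is precisely this gluing analysis at $\econe{p_\infty}$: one must verify both that the arcs of $\econe{p_\infty}\setminus(\phi_\infty\cup\psi_\infty)$ are genuinely limit points of the crooked halfspaces (so the two candidate components are connected across the compactification locus) and that no arc straddles both halfspaces (so there really are only two components). A clean way to handle the connectedness is to note that each open half-lightcone forming a compactified wing is connected and meets $\iota(\E)$, hence lies entirely in the closure of one crooked halfspace; what remains of $\econe{p_\infty}$ beyond the wings' boundary photons must then be sorted, and here the pinched-torus picture of the lightcone plus the explicit equations for $\phi_\infty,\psi_\infty$ make the count transparent. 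Finally I would record that $\Eint$ is connected (it is $S^2\times S^1/\!\sim$) so that exhibiting a separation into exactly two nonempty open sets with common boundary $\CS$ is the full content of ``$\CS$ separates $\Eint$'', and invoke $\SOTT$-equivariance to conclude for an arbitrary crooked surface. As the excerpt notes, this can also be phrased as a cut-and-paste verification in the model of~\cite{BCDGM}, which is the shortest route if one is willing to cite that picture.
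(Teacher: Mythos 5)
Your strategy is genuinely different from the paper's: the paper passes to the double cover $S^2\times[0,\pi]\subset\Einhat$, checks slice by slice that $\hat{\CS}\cap(S^2\times\{t\})$ separates each sphere, and then tracks the antipodal identification, whereas you reduce to the known affine fact that $\CP{o,\vu}$ separates $\E$ into two crooked halfspaces and then sort the locus at infinity. The decomposition
$\Eint\setminus\CS=\iota(\CHS{o,\vu})\sqcup\iota(\CHS{o,-\vu})\sqcup\bigl(\econe{p_\infty}\setminus(\phi_\infty\cup\psi_\infty)\bigr)$
is correct, and the plan is viable. But as written there is a genuine gap: the step you yourself flag as ``the computational heart'' is never carried out, and it is exactly the content of the theorem beyond the affine statement. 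Concretely, what must be shown is that at every point $q\in\econe{p_\infty}\setminus(\phi_\infty\cup\psi_\infty)$, the two local sides of the smooth surface $\econe{p_\infty}$ (whose complement in $\Eint$ is the single connected Minkowski patch, so crossing it at $q$ is a legitimate way to travel between regions of $\iota(\E)$) both land in the \emph{same} crooked halfspace; equivalently, both ends of any null line whose direction is not parallel to $\xpm{\vu}$ lie in the same component of $\E\setminus\CP{o,\vu}$. If this failed for some $q$, a short path through $q$ would join $\iota(\CHS{o,\vu})$ to $\iota(\CHS{o,-\vu})$ inside $\Eint\setminus\CS$ and there would be only one component. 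The claim is true, but it requires an actual computation (e.g.\ with the halfspace inequalities or in the coordinates of~\S\ref{sec:basic}); ``one checks'' and ``one traces'' is where the proof has to live.

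Two smaller points. First, the remark that ``each open half-lightcone forming a compactified wing \ldots lies entirely in the closure of one crooked halfspace'' cannot do any sorting work: the wings are part of $\CS$ and lie in the common frontier of \emph{both} halfspaces, so this observation is vacuous for deciding how the pieces at infinity attach. What you actually need is the two-sided local analysis at points of $\econe{p_\infty}\setminus(\phi_\infty\cup\psi_\infty)$ described above, after which the count is immediate: each of the two arcs-regions at infinity, together with the unique halfspace it bounds against, forms an open connected set, these two sets are disjoint, and their union is $\Eint\setminus\CS$. Second, your observation that removing the two photons disconnects the pinched torus into two pieces is right and worth stating, but by itself it only bounds the number of candidate components; it does not prevent the two halfspaces from merging through infinity. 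Once the local check is supplied, your argument closes and gives an alternative to the paper's slice-by-slice proof; without it, the proposal is a correct outline rather than a proof.
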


\begin{proof}
It suffices to show this for $\CS$ in the basic example.

Let $\hat{\CS}$ be the inverse image of $\CS$ in $S^2\times [0,\pi]\subset\Einhat$.  Let us first examine the intersection of $\hat{\CS}$ with $S^2\times\{t\}$, for $t\in[0,\pi]$.   
\begin{itemize}
\item When $t=0$, this is the union of the two meridians $\theta=0$ and $\theta=\pi$, each meridian belonging to a wing.  
\item When $t=\pi$, we obtain the same two meridians, but they now belong to the opposite wing.  
\item When $t=\pi/2$, we obtain the union of the two meridians $\theta=\pm\pi/2$.
\item For all other values of $t$, the intersection consists of two arc segments on $\theta=\pm\pi/2$, joined by two half circles (one for each wing).
\end{itemize}
Thus for every $t\in[0,\pi]$, $\hat{\CS}\cap (S^2\times\{t\})$ separates $S^2\times\{t\}$.  Consequently, since $\hat{\CS}$ is a connected surface (with boundary), it separates $S^2\times [0,\pi]$.  Furthermore, the following two subsets of $S^2\times [0,\pi]\setminus\hat{\CS}$ belong to the same connected component\,:
 \begin{align*}
 \{0<\theta<\pi\} & \times\{t=0\} \\
  \{\pi<\theta<2\pi\} & \times\{t=\pi\}.
  \end{align*}
  These two sets are mapped to each other by the antipodal map.   Therefore, $\Eint\setminus\CS$ has two connected components as well.
%
%
\end{proof}

\subsection{Torus data spanning a crooked surface}

A crooked surface in $\Eint$ is determined by torus data, as we will now see.  Set\,: 
\begin{equation*}
 \data=\{p_1,p_2,f_1,f_2\}
\end{equation*}
where, as in~\S\ref{sub:torus}\,: 
\begin{itemize}
\item $p_1,p_2\in\Eint$ are a pair of non-incident points ;
\item $f_1,f_2\in\econe{p_1}\cap\econe{p_2}$.
\end{itemize}

The torus data $\data$ determines two pairs of photons\,:
\begin{itemize}
\item one pair of photons $\phi_1,\phi_2$ belonging to $\econe{p_1}$;
\item one pair of photons $\psi_1,\psi_2$ belonging to $\econe{p_2}$;
\item each pair of photons $\phi_i,\psi_i$ intersects in $f_i$.  
\end{itemize}
Alternatively, $\phi_i$ and $\psi_i$ belong to the lightcone of $f_i$.

Here is how we may define the stem of the crooked surface.  Choose any conformal map\,: 
\begin{equation*}
\kappa:\Eint\setminus\econe{p_2}\longrightarrow\rto
\end{equation*}
and set\,: 
\begin{equation*}
 \NF{p_1}=\{ q\in\Eint\setminus\econe{p_2}~\mid~\kappa(q)-\kappa(p_1)\mbox{ is a timelike vector}\}.
\end{equation*}
(Alternatively, $\NF{p_1}$ is the set of endpoints of 
timelike geodesic paths starting from $p_1$, given an arbitrary choice of Lorentz metric in the conformal class.)  Observe that the definition is symmetric in the pair $\{p_1,p_2\}$, so that we can write unambiguously\,:
\begin{equation*}
\NF{\data}=\NF{p_1}=\NF{p_2}.
\end{equation*}
Let $\einp{\data}$ be the Einstein torus determined by the torus data $\data$.  The
stem, denoted $\stem{\data}$, consists of the intersection\,:
\begin{equation*}
\stem{\data}=\einp{\data}\cap\NF{\data}.
\end{equation*}
%



As for the wings, consider the following.  The pair of photons $\phi_1$ and
$\psi_1$ bounds two half lightcones in $\econe{f_1}$, as does the pair
$\phi_2$ and $\psi_2$ in $\econe{f_2}$. Each half lightcone intersects the
common spacelike circle in a half circle, bounded by $p_1$ and $p_2$. We want
to choose one half lightcone from each pair, in such a way that they intersect
only in $p_1$ and $p_2$.  
In fact, there are two ways to do this.  Conformally identifying
$\Eint\setminus\econe{p_2}$ with $\iota(\rto)$ via an element of $\sott$\,:  
\begin{itemize}
 \item one way yields a pair of positive wings, $\wing{\data,f_1}$ and
$\wing{\data,f_2}$;
  \item the other way yields a pair of negative wings.
\end{itemize}

Thus the crooked surface {\em spanned by the torus data} $\data$ is\,:
\begin{equation*}
\CS\left(\data\right)= \stem{\data}\cup\wing{\data,f_1}\cup\wing{\data,f_2}.
\end{equation*}

\begin{lemma}\label{lem:nocontain}
Any lightcone in $\Eint$ intersects both components of the complement of a
crooked surface.
\end{lemma}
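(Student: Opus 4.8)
The plan is to reduce to the basic crooked surface of Section~\ref{sec:basic} and then to analyse an arbitrary lightcone inside a Minkowski patch. Since $\CONF$ acts transitively on $\Eint$, it acts transitively on lightcones and, simultaneously, on crooked surfaces; hence it suffices to prove the statement for $\CS=\conf{\CP{o,\vu}}$ with $o=(0,0,0)$, $\vu=(1,0,0)$ as in Section~\ref{sec:basic}, and for an arbitrary lightcone $\econe{r}$, $r\in\Eint$. Let $A,B$ denote the two components of $\Eint\setminus\CS$, which exist because a crooked surface separates $\Eint$; being components of an open subset of a manifold, they are open, non-empty, and $A\cup B=\Eint\setminus\CS$. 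In the patch $\iota(\E)=\Eint\setminus\econe{p_\infty}$ one has $\CS\cap\iota(\E)=\iota(\CP{o,\vu})$, so $\iota(\E)\setminus\CS=\iota(\CHS{o,\vu})\sqcup\iota(\CHS{o,-\vu})$ is the disjoint union of the two open crooked halfspaces. Since $\econe{p_\infty}$ has empty interior, each of $A$ and $B$ meets $\iota(\E)$; hence, after relabelling, $A\cap\iota(\E)=\iota(\CHS{o,\vu})$ and $B\cap\iota(\E)=\iota(\CHS{o,-\vu})$. Because the torus data $\{p_0,p_\infty,f_1,f_2\}$ of $\CS$ is symmetric in the pair $\{p_0,p_\infty\}$, the trace of $\CS$ on the other patch $\Eint\setminus\econe{p_0}$ is again a crooked plane, now with vertex $p_\infty$, and the same argument identifies $A$ and $B$, restricted to that patch, with its two crooked halfspaces.

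The key input is an elementary fact about Minkowski space: \emph{in $\rto$, every affine lightcone, and every plane $\vx^\perp$ tangent to the lightcone (with $\vx$ null), meets each of the two open crooked halfspaces determined by any crooked plane.} To see this, use coordinates on $\rto$ in which $\vu=(1,0,0)$ and $\xpm{\vu}=(0,\mp1,1)$, so that the form is $x^2+y^2-z^2$. A direct computation with the description of the basic crooked plane shows that $\CHS{o,\vu}$ contains the two affine wedges $\{x<0,\ y>z\}$ and $\{x>0,\ y>-z\}$, while $\CHS{o,-\vu}$ contains the wedges obtained by reversing the inequality on $y$; each such wedge is an intersection of two open half-spaces whose common edge is a spacelike line. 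An affine lightcone contains null rays running to infinity along a whole circle of null directions, and a plane $\vx^\perp$ tangent to the lightcone is two-dimensional; in either case one directly exhibits points lying in at least one of the listed wedges of $\CHS{o,\vu}$ and in one of those of $\CHS{o,-\vu}$, which proves the claim.

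Now apply this to $\econe{r}$. If $r\notin\econe{p_\infty}$, then $r\in\iota(\E)$ and $\econe{r}\cap\iota(\E)$ is an affine lightcone; by the previous paragraph it meets both $\iota(\CHS{o,\vu})=A\cap\iota(\E)$ and $\iota(\CHS{o,-\vu})=B\cap\iota(\E)$, so $\econe{r}$ meets both $A$ and $B$. If $r\notin\econe{p_0}$, the same argument applies in the patch $\Eint\setminus\econe{p_0}$. Since $p_0\notin\econe{p_\infty}$, the points $p_0,p_\infty$ are not incident, so by Lemma~\ref{lem:twocones} the only remaining possibility is $r\in\econe{p_0}\cap\econe{p_\infty}$, a spacelike circle; for such $r$ one has $r=(0:\xi:0)$ for a future-pointing null vector $\xi\in\rto$, and a short computation gives $\econe{r}\cap\iota(\E)=\iota(\xi^\perp)$, a plane tangent to the lightcone, to which the Minkowski input again applies. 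In every case $\econe{r}$ meets both components of $\Eint\setminus\CS$.

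The one delicate point is this last case: a lightcone based on the spacelike circle $\econe{p_0}\cap\econe{p_\infty}$ is not an affine lightcone in either of the two natural Minkowski patches, and one must first recognise its trace in a patch as a degenerate plane before the Minkowski-space fact can be invoked. (Alternatively, one can avoid the second patch altogether by working only in $\Eint\setminus\econe{p_\infty}$, at the cost of allowing affine, rather than linear, planes tangent to the lightcone in the statement of the Minkowski input.) The remaining work is the bookkeeping of the first paragraph, matching $A$ and $B$ with crooked halfspaces in the relevant patches; the geometry in $\rto$ itself is routine.
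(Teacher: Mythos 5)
Your proof is correct, but it follows a genuinely different route from the paper's. The paper argues synthetically inside $\Eint$: writing the crooked surface via torus data $\{p_1,p_2,f_1,f_2\}$, it observes that the wings are half lightcones in $\econe{f_1}$ and $\econe{f_2}$, so for any $q$ not incident to $f_1$ (or $f_2$) the lightcone $\econe{q}$ meets $\econe{f_1}$ transversely in a spacelike circle (Lemma~\ref{lem:twocones}) and hence crosses the wing, passing from one complementary component to the other; when $q\in\econe{f_1}\cap\econe{f_2}$, the shared photon is set aside and the remaining photons of $\econe{q}$ cross the wing transversely. You instead normalize to the basic example, split $\Eint$ into the two Minkowski patches at $p_\infty$ and $p_0$ plus the circle $\econe{p_0}\cap\econe{p_\infty}$, identify the two complementary components patchwise with crooked halfspaces (your connectedness/empty-interior bookkeeping there is valid), and reduce everything to an explicit affine fact about wedges contained in $\CHS{o,\pm\vu}$; your wedge containments $\{x<0,\,y>z\},\{x>0,\,y>-z\}\subset\CHS{o,\vu}$ (and their mirrors) are correct, and the claim that affine lightcones and the degenerate planes $\iota(\xi^\perp)$ meet such wedges, though only sketched, is elementary and checks out — note that your third case $r\in\econe{p_0}\cap\econe{p_\infty}$ is exactly the paper's ``incident'' case, and when $\xi$ is parallel to $\xpm{\vu}$ the plane $\xi^\perp$ actually contains a wing, so one must (as you allow) use the other wedge; it would be worth making that subcase explicit. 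The trade-off: the paper's argument is shorter, coordinate-free, and works for arbitrary torus data without normalization, but leaves the ``transversality implies points on both sides'' step implicit; your argument is more computational and longer, but is self-contained modulo routine Minkowski computations and makes the two-sidedness completely concrete.
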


\begin{proof}
Let $q\in\Eint$ and consider the crooked surface spanned by the torus data $\data=\{p_1,p_2,f_1,f_2\}$.  If $q,f_1$ are not incident, then, $\econe{q}$ and $\econe{f_1}$ intersect transversely, by Lemma~\ref{lem:twocones}
.  The analogous statement holds if $q,f_2$ are not incident.  

Otherwise, $q$ lies on $\econe{f_1}\cap\econe{f_2}$, so that its lightcone shares a photon $\phi$ with either $\wing{\data,f_1}$ or $\wing{\data,f_2}$; say $\phi=\econe{q}\cap\wing{\data,f_1}$.  In that case, any photon in $\econe{q}$ that is different from $\phi$ intersects $\wing{\data,f_1}$ transversely. 
\end{proof}

\section{How to get disjoint crooked surfaces}\label{sec:disjoint}

We have seen how to obtain disjoint crooked planes, starting with a pair of crooked planes sharing a vertex and ``pulling them apart'' using allowable translations, or 
translations in their respective stem quadrants.  We will promote this idea to crooked surfaces.  We will apply the observation that the torus data spanning a crooked surface is symmetric in the first pair of points (or even the second pair of points).  We can pull a pair of crooked surfaces apart at both points, producing a pair of disjoint crooked surfaces.

\subsection{Step one\,: start with an appropriate pair of crooked planes and
compactify}

Let $\vu_1,\vu_2\in\rto$ be a pair of consistently oriented spacelike
vectors.  To fix ideas, set $o=(0,0,0)\in\E$.  The crooked surfaces $\conf{\CP{o,\vu_i}}$ 
intersect in exactly two points, namely, $p_0$ and $p_\infty$.  Express the pair of crooked surfaces as $\CS(\data_i)$, for $i=1,2$, where\,: 
\begin{align*}
\data_1 &=\{p_0,p_\infty,f_1,f_2\} \\
\data_2 &=\{p_0,p_\infty,f'_1,f'_2\}
\end{align*}
and $f_1,f_2,f'_1,f'_2$ are distinct points in $\econe{p_0}\cap\econe{p_\infty}$.

Set\,: 
\begin{equation*}
H_i=\conf{\CHS{o,\vu_i}},~i=1,2.
\end{equation*}
These are the components of the complement of each
$\CS(\data_i)$ that are disjoint.  Their closures intersect in exactly $p_0$ and $p_\infty$.

\begin{lemma}\label{lem:stepone}
 Let $(\vz_1,\vz_2)\in\allow{\vu_1,\vu_2}$.  Then\,: 
\begin{equation*}
 \conf{\CP{o+\vz_i,\vu_i}}\subset\cl{H_i}
\end{equation*}
Moreover, these crooked surfaces intersect only in $p_\infty$.
\end{lemma}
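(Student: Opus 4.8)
The plan is to leverage Theorem~\ref{thm:strong} for the affine (Minkowski) part, and then separately analyze what happens at the extra point $p_\infty$ added by conformal compactification. First I would observe that by Theorem~\ref{thm:strong}, since $(\vz_1,\vz_2)\in\allow{\vu_1,\vu_2}$ implies in particular that $o+\vz_i\in\Quad{o,\vu_i}$, we get $\CP{o+\vz_i,\vu_i}\subset\cl{\CHS{o,\vu_i}}$ and $\CP{o+\vz_1,\vu_1}\cap\CP{o+\vz_2,\vu_2}=\emptyset$ inside $\E$. Applying the conformal embedding $\iota$, which is a homeomorphism onto $\Eint\setminus\econe{p_\infty}$, these relations transport to $\iota(\CP{o+\vz_i,\vu_i})\subset\cl{\CHS{o,\vu_i}}^{\mbox{\tiny conf}}$ minus its points at infinity, and the two images remain disjoint in $\Eint\setminus\econe{p_\infty}$. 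So the only place the argument can fail is on $\econe{p_\infty}$.

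Next I would identify exactly which points of $\conf{\CP{o+\vz_i,\vu_i}}$ lie on $\econe{p_\infty}$. By the explicit description of $\conf{\CP{p,\vu}}$ preceding the basic example, the part of the crooked surface $\conf{\CP{o+\vz_i,\vu_i}}$ meeting $\econe{p_\infty}$ consists precisely of the two photons $\phi_\infty^{(i)}$ and $\psi_\infty^{(i)}$, the photons through $(0:\xp{\vu_i}:0)$ and $(0:\xm{\vu_i}:0)$ respectively. The key point is that these photons depend only on the directions $\xpm{\vu_i}$ and \emph{not} on the vertex $o+\vz_i$ — translating the vertex does not change the photons at infinity. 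Hence $\conf{\CP{o+\vz_i,\vu_i}}\cap\econe{p_\infty}=\conf{\CP{o,\vu_i}}\cap\econe{p_\infty}$, and this intersection is the same for the translated crooked surface as for the original $\CS(\data_i)$. Since the original crooked surfaces $\CS(\data_1)$, $\CS(\data_2)$ meet $\econe{p_\infty}$ in the scaffolding photons that, by hypothesis, intersect each other only at $p_\infty$ (the $f$'s and $f'$'s being distinct, the photons $\phi_\infty^{(1)},\psi_\infty^{(1)},\phi_\infty^{(2)},\psi_\infty^{(2)}$ are four distinct photons of $\econe{p_\infty}$, all passing through $p_\infty$ and pairwise meeting only there), the contributions at infinity of the two translated surfaces also meet only at $p_\infty$. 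Combining with the disjointness in the Minkowski patch gives $\conf{\CP{o+\vz_1,\vu_1}}\cap\conf{\CP{o+\vz_2,\vu_2}}=\{p_\infty\}$.

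For the containment $\conf{\CP{o+\vz_i,\vu_i}}\subset\cl{H_i}$, I would argue that $\cl{H_i}=\cl{\conf{\CHS{o,\vu_i}}}$ is the union of $\iota(\cl{\CHS{o,\vu_i}})$ (minus its ideal locus) with the relevant portion of $\econe{p_\infty}$; the affine part of $\conf{\CP{o+\vz_i,\vu_i}}$ lies in $\iota(\cl{\CHS{o,\vu_i}})$ by Theorem~\ref{thm:strong}, and the ideal part (the two photons $\phi_\infty^{(i)},\psi_\infty^{(i)}$) lies in $\cl{H_i}$ because it already lies in $\CS(\data_i)=\partial H_i$, which is contained in $\cl{H_i}$. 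One must check that taking closures is compatible with $\iota$ here, i.e. that $\cl{\iota(\CHS{o,\vu_i})}=\conf{\CHS{o,\vu_i}}$, which follows from the explicit compactification formulas. I expect the main obstacle to be precisely this bookkeeping at infinity: making rigorous the claim that the scaffolding photons of a compactified crooked plane are vertex-independent, and verifying that the closure operations behave well under the conformal embedding — in particular that no \emph{new} intersection points between the two translated surfaces are created on $\econe{p_\infty}$ beyond $p_\infty$ itself. This is where a careful use of the coordinate formulas for $\phi_p,\psi_p,\phi_\infty,\psi_\infty$, together with the fact that distinct photons in a single lightcone meet only at the vertex, does the work.
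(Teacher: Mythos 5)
Your proposal is correct and in essence follows the paper's own argument: both hinge on Theorem~\ref{thm:strong}, which gives $\CP{o+\vz_i,\vu_i}\subset\cl{\CHS{o,\vu_i}}$ (hence the compactifications lie in $\cl{H_i}$) together with disjointness of the affine crooked planes. The only difference is one of bookkeeping: the paper rules out intersection points other than $p_\infty$ by invoking the already-noted fact that $\cl{H_1}\cap\cl{H_2}=\{p_0,p_\infty\}$ and that $p_0$ is lost after translating, whereas you verify the locus at infinity directly via the vertex-independent scaffolding photons in $\econe{p_\infty}$; both routes are valid.
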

\begin{proof}
 By Theorem~\ref{thm:strong}, $\CP{o+\vz_i,\vu_i}\subset\CHS{o,\vu_i}$ and thus
its conformal 
compactification remains in the closure of $H_i$. However, the pair of crooked
surfaces will no 
longer intersect in $p_0$.
\end{proof}

\subsection{Step two\,: do the same thing at infinity}

Given a crooked surface spanned by torus data $\{p_1,p_2,f_1,f_2\}$, there exists a conformal involution of $\Eint$ that permutes $p_1$ and $p_2$, fixes $f_1$ and $f_2$ and leaves the crooked surface invariant.  In the case at hand, set\,: 
\begin{equation*}
\rr : (v_1:v_2:v_3:v_4:v_5)\longmapsto (-v_1:v_2:v_3:v_4:v_5).
\end{equation*}
Then $\rr(p_0)=p_\infty$; moreover, $\rr$ pointwise 
fixes $\econe{p_0}\cap\econe{p_\infty}$.  In fact, its fixed point set is the Einstein torus\,: 
\begin{align*}
(0 & : \sin t:\cos t : \sin s: \cos s) \\
0 & \leq t\leq 2\pi \\
0 & \leq s\leq \pi.
\end{align*}
Since it is a conformal map, $\rr$ maps stems and wings, respectively, to stems and wings.
\begin{lemma}\label{lem:nuinvariant}
 Every crooked surface spanned by torus data $\{p_0,p_\infty,*,*\}$ is
$\rr$-invariant.  Each of its complementary components is invariant as well.
\end{lemma}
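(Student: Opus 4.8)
The plan is to reduce the statement to a concrete computation about the map $\rr$ acting on crooked surfaces spanned by torus data of the form $\{p_0, p_\infty, *, *\}$, using the fact that all such surfaces are conformally equivalent to the basic example $\CS = \conf{\CP{o,\vu}}$ with $o = (0,0,0)$, $\vu = (1,0,0)$. First I would observe that the stabilizer in $\sott$ of the pair $\{p_0, p_\infty\}$ acts transitively on the set of crooked surfaces with torus data $\{p_0, p_\infty, *, *\}$ (the two focal points $*, *$ range over $\econe{p_0} \cap \econe{p_\infty}$, a spacelike circle, and the relevant stabilizer — essentially a copy of $\SOTO$ acting on that circle together with the Minkowski patch — is transitive on admissible pairs of focal points); hence it suffices to prove the lemma for the basic example $\CS$ itself, provided I also check that $\rr$ commutes with (or is suitably normalized by) this stabilizer, so that $\rr$-invariance is preserved under the conformal equivalence. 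Concretely, $\rr$ is the projectivization of $\operatorname{diag}(-1,1,1,1,1) \in \OTT$, and conjugating it by an element $g$ of the $\{p_0,p_\infty\}$-stabilizer gives another conformal involution swapping $p_0$ and $p_\infty$ and fixing $\econe{p_0}\cap\econe{p_\infty}$ pointwise; since such an involution is uniquely determined by these properties (its fixed locus is the Einstein torus $\{(0:\ast:\ast:\ast:\ast)\}$ and it acts as $-1$ on the complementary line), $g \rr g^{-1} = \rr$, i.e. $\rr$ is central in that stabilizer. This is the cleanest way to transport the conclusion from $\CS$ to an arbitrary surface in the family.

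Next I would verify the lemma directly for $\CS$. Using the parametrization~\eqref{param} and the explicit description of $\CS$ in~\S\ref{sec:basic}: the stem is cut out by the Einstein torus $\theta = \pm\pi/2$ together with the condition that $\phi$ lie outside the open interval bounded by $t$ and $\pi/2 - t$, and the two wings are the half lightcones in $\econe{0:0:\pm 1:1:0}$ described there. Since $\rr$ negates only the first projective coordinate, it preserves the coordinate $\phi$ up to the substitution $\cos\phi \mapsto -\cos\phi$, i.e. $\phi \mapsto \pi - \phi$; meanwhile it fixes $\theta$, $t$ and the $(v_4:v_5)$-coordinates. The torus $\theta = \pm\pi/2$ is visibly $\rr$-invariant. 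The defining inequality for the stem — that $\phi$ lies outside the interval between $t$ and $\pi/2-t$ — must be re-examined under $\phi \mapsto \pi - \phi$; I expect that in the correct (antipodally identified) coordinates the stem condition is symmetric, because $p_0$ (at $\phi = 0$) and $p_\infty$ (at $\phi = \pi$) play symmetric roles and the stem is exactly the set of points "timelike-related to both," a symmetric condition, which is precisely the content of the symmetry of $\NF{\data}$ noted just before Lemma~\ref{lem:nocontain}. For the wings, $\rr$ fixes the two focal points $f_1 = (0:0:1:1:0)$ and $f_2 = (0:0:-1:1:0)$, hence permutes the photons through them; since it also swaps $p_0$ and $p_\infty$, it maps the half lightcone of $\econe{f_i}$ bounded by $\{\phi_\infty^{(i)}, \phi_{p_0}^{(i)}\}$ to itself (the two boundary photons are interchanged, but the half lightcone between them — determined by which of the two complementary pieces of $\econe{f_i} \setminus (\text{two photons})$ it is — is the same region). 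Thus $\rr(\CS) = \CS$.

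For the second sentence — that each complementary component is $\rr$-invariant — I would argue as follows. By the separation theorem, $\Eint \setminus \CS$ has exactly two components, and $\rr$ either fixes both or swaps them. To rule out swapping, it suffices to exhibit one point of $\Eint \setminus \CS$ that is $\rr$-fixed, or more robustly to exhibit a point whose $\rr$-image lies in the same component. The fixed locus of $\rr$ is the Einstein torus $T_{\rr} = \{(0 : \sin t : \cos t : \sin s : \cos s)\}$; since $T_{\rr}$ is a closed surface not contained in $\CS$ (it meets $\CS$ only in the stem's bounding torus and the focal circle, not wholly), it must meet $\Eint \setminus \CS$, and any such fixed point lies in a component that is therefore mapped to itself — forcing both components to be invariant. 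Concretely I would just name a point, e.g. something like $(0:1:0:0:1) = \iota(\vu)$ if it is off $\CS$, check it is not on $\CS$, and note it is $\rr$-fixed. The main obstacle I anticipate is the bookkeeping in the stem inequality under $\phi \mapsto \pi - \phi$ combined with the $t = 0 \leftrightarrow t = \pi$ antipodal gluing: one must be careful that "outside the open interval bounded by $t$ and $\pi/2 - t$" really is invariant, and this is best handled by invoking the coordinate-free characterization $\stem{\data} = \einp{\data} \cap \NF{\data}$ together with $\rr(p_0) = p_\infty$, rather than fighting the explicit parametrization; the wing computation is then the only genuinely geometric point, and it is short.
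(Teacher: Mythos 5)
Your overall route is the same as the paper's (reduce to the basic example by conjugation; get the stem from invariance of the Einstein torus together with the symmetry of $\NF{\data}$; use the fixed-point set of $\rr$ for the complementary components), but two of your steps are wrong as written. The first is the uniqueness/centrality claim on which your reduction rests: a conformal involution of $\Eint$ swapping $p_0$ and $p_\infty$ and fixing $\econe{p_0}\cap\econe{p_\infty}$ pointwise is \emph{not} unique, and $\rr$ is \emph{not} central in the stabilizer of $\{p_0,p_\infty\}$. Indeed, the homothety $\vv\mapsto e^s\vv$ of the Minkowski patch lies in that stabilizer (it fixes $p_0=\iota(o)$ and $p_\infty$), extends to a boost $B_s$ of the $(x_1,x_5)$-plane of $\rtt$ acting trivially on the circle $\econe{p_0}\cap\econe{p_\infty}$, and satisfies $B_s\,\rr\,B_s^{-1}=B_{2s}\,\rr\neq\rr$ for $s\neq 0$; this conjugate is another involution with exactly the properties you list, whose fixed set is a \emph{different} Einstein torus through that circle. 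So your stated reason for $g\rr g^{-1}=\rr$ fails (your stabilizer is also bigger than ``a copy of $\SOTO$'': it contains these homotheties, which is precisely where centrality breaks). The reduction is repairable: take the conjugating map in the block-diagonal subgroup $\{1\oplus A\oplus 1:\ A\in\operatorname{O}(2,1)\}$, which already acts transitively on the admissible focal pairs on the circle (and, via time-preserving but orientation-reversing $A$, also reaches the negatively extended surfaces), and which commutes with $\rr=\operatorname{diag}(-1,1,1,1,1)$ by inspection.

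The second problem is the wing step, which is asserted rather than proved: knowing that $\rr$ preserves $\econe{f_i}$ and interchanges the two boundary photons (the one through $p_0$ and the one through $p_\infty$) does not by itself decide whether the two open half lightcones they bound are preserved or exchanged; a conformal map with that boundary behavior can do either. The clean fix is the paper's own argument, which is exactly the fixed-point device you reserve for the complement: each wing of the basic example meets the fixed torus $\{x_1=0\}$ of $\rr$ in an interior point, e.g. $\iota(-\vu)=(0:-1:0:0:1)$ lies in $\conf{o+\wing{\xm{\vu}}}$ and $\iota(\vu)=(0:1:0:0:1)$ in the other wing; since $\rr$ sends a wing to one of the two half lightcones with the same boundary photons and the image shares an interior point with the original, it equals it. Note, incidentally, that this shows your suggested witness for the component step is wrong: $(0:1:0:0:1)=\iota(\vu)$ lies \emph{on} $\CS$ (recall $\vu\in\wing{\xp{\vu}}$), so it is not a fixed point off the surface; your fallback argument --- the fixed torus cannot be contained in the Klein bottle $\CS$, hence meets the complement, and an $\rr$-fixed point in one of the two components forces both components to be invariant --- is the correct one and matches the paper.
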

\begin{proof}
Conjugating by an automorphism of $\Eint$, it suffices to check this for $\data$ as in~\S\ref{sec:basic}.  The stem is invariant, since the underlying Einstein torus is.  As for the wings, they must be invariant as well since they intersect the fixed point set, as does each component of the complement of the crooked surface.
\end{proof}

For any $\vv\in\rto$, let $\tt_\vv$ be the linear automorphism of $\rtt$
induced by 
translation in $\E$ by $\vv$.  This induces an automorphism of $\Eint$ which we
abusively
 also denote by $\tt_\vv$.  In particular, $\rr\tt_\vv\rr$ is an
automorphism of
$\Eint$ which fixes $p_0$.
The following is an immediate consequence of Lemmas~\ref{lem:stepone} and~\ref{lem:nuinvariant}.
\begin{cor}
 Let $(\vz_1,\vz_2)\in\allow{\vu_1,\vu_2}$.  Then $\rr\tt_{\vz_i}\rr$
maps 
$\conf{\CP{o,\vu_i}}$ into $\cl{H_i}$.  In particular, the resulting pair
of 
crooked surfaces intersect in $p_0$ only.
\end{cor}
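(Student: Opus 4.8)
The plan is to deduce the corollary by conjugating Lemma~\ref{lem:stepone} by the involution $\rr$, using Lemma~\ref{lem:nuinvariant} to see that the relevant objects are $\rr$-invariant. The argument is essentially bookkeeping, so there is no substantial obstacle; the one point that must be respected is that $\rr$-conjugation may be slid past the translation $\tt_{\vz_i}$ only because the starting surface $\conf{\CP{o,\vu_i}}$ is itself $\rr$-invariant.

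First I would record the identity
\begin{equation*}
\rr\,\tt_{\vz_i}\,\rr\!\left(\conf{\CP{o,\vu_i}}\right)=\rr\!\left(\conf{\CP{o+\vz_i,\vu_i}}\right).
\end{equation*}
Indeed, $\conf{\CP{o,\vu_i}}=\CS(\data_i)$ is spanned by torus data of the form $\{p_0,p_\infty,*,*\}$, hence is $\rr$-invariant by Lemma~\ref{lem:nuinvariant}, so the left-hand side equals $\rr\,\tt_{\vz_i}\!\left(\conf{\CP{o,\vu_i}}\right)$; and since $\tt_{\vz_i}$ is the conformal automorphism of $\Eint$ induced by translation by $\vz_i$, it carries the crooked plane $\CP{o,\vu_i}$, and therefore its conformal compactification, to $\conf{\CP{o+\vz_i,\vu_i}}$.

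Next I would invoke Lemma~\ref{lem:stepone}. It gives $\conf{\CP{o+\vz_i,\vu_i}}\subset\cl{H_i}$, while Lemma~\ref{lem:nuinvariant}, applied to the complementary component $H_i=\conf{\CHS{o,\vu_i}}$ of $\CS(\data_i)$, shows that $H_i$, and hence (as $\rr$ is a homeomorphism) its closure $\cl{H_i}$, is $\rr$-invariant. Applying $\rr$ to the inclusion and using the displayed identity yields $\rr\,\tt_{\vz_i}\,\rr\!\left(\conf{\CP{o,\vu_i}}\right)\subset\cl{H_i}$, which is the first assertion. For the ``in particular'', Lemma~\ref{lem:stepone} also records that $\conf{\CP{o+\vz_1,\vu_1}}\cap\conf{\CP{o+\vz_2,\vu_2}}=\{p_\infty\}$; applying the bijection $\rr$ turns the two sets on the left into the surfaces $\rr\,\tt_{\vz_i}\,\rr\!\left(\conf{\CP{o,\vu_i}}\right)$ and turns the right-hand side into $\{\rr(p_\infty)\}=\{p_0\}$, which is exactly the claim. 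As a consistency check, $\rr\,\tt_{\vz_i}\,\rr$ fixes $p_0$ — $\rr$ interchanges $p_0$ and $p_\infty$ and the affine map $\tt_{\vz_i}$ fixes $p_\infty$ — so $p_0$ genuinely lies in the intersection and the intersection is not empty.
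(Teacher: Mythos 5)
Your proof is correct and follows exactly the route the paper intends: the paper's own justification is the one-line remark that the corollary is an immediate consequence of Lemmas~\ref{lem:stepone} and~\ref{lem:nuinvariant}, and your argument simply fills in that bookkeeping. The key identity $\rr\tt_{\vz_i}\rr\left(\conf{\CP{o,\vu_i}}\right)=\rr\left(\conf{\CP{o+\vz_i,\vu_i}}\right)$, together with the $\rr$-invariance of $\cl{H_i}$ and the fact that $\rr$ swaps $p_\infty$ and $p_0$, is precisely what the authors leave implicit.
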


\begin{thm}\label{thm:disjointcs}
 Let $(\vz_1,\vz_2),(\vz_1',\vz_2')\in\allow{\vu_1,\vu_2}$.  Then the crooked
surfaces 
$\rr\tt_{\vz'_i}\rr\,\conf{\CP{o+\vz_i,\vu_i}}$ are disjoint.
\end{thm}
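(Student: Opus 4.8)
The plan is to combine the two ``pulling apart'' operations -- the one performed in Step one near $p_0$ and the one performed in Step two near $p_\infty$ -- and check that they do not interfere with each other. Write $S_i = \conf{\CP{o+\vz_i,\vu_i}}$ and $S_i' = \rr\tt_{\vz_i'}\rr\,S_i$. By Lemma~\ref{lem:stepone} we know $S_i \subset \cl{H_i}$ and that $S_1 \cap S_2 \subseteq \{p_\infty\}$; we must show that after applying $\rr\tt_{\vz_i'}\rr$ we obtain surfaces with empty intersection. The first thing I would record is the effect of the conjugated translation $\rr\tt_{\vz_i'}\rr$ on the relevant data: since this automorphism fixes $p_0$ (as noted just before the Corollary) and, by Lemma~\ref{lem:nuinvariant}, preserves $\cl{H_i}$ and $\CS(\data_i)$, it maps $\cl{H_i}$ into itself and fixes $p_0$, while moving $p_\infty = \rr(p_0)$ off of $\CS(\data_i)$ into the interior of $H_i$ (this is the ``same thing at infinity'' Corollary applied inside $\cl{H_i}$). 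So $S_i' \subset \cl{H_i}$ as well.

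Next I would pin down the intersection $S_1' \cap S_2'$. Since $S_i' \subset \cl{H_i}$ and $\cl{H_1} \cap \cl{H_2} = \{p_0,p_\infty\}$, any point of $S_1' \cap S_2'$ must be $p_0$ or $p_\infty$. So it suffices to check that neither $p_0$ nor $p_\infty$ lies on both $S_1'$ and $S_2'$. For $p_0$: apply $\rr$ to the conclusion of Step one. Conjugating Lemma~\ref{lem:stepone} by $\rr$ (which, by Lemma~\ref{lem:nuinvariant}, preserves each $\cl{H_i}$ and swaps $p_0 \leftrightarrow p_\infty$), the pair $\rr\,S_i$ lies in $\cl{H_i}$ and meets only in $p_0$; precomposing with $\tt_{\vz_i'}$ and then $\rr$ again -- i.e. considering $\rr\tt_{\vz_i'}\rr\,S_i = S_i'$ -- we should get that the two surfaces $S_i'$ meet the ``$p_0$ end'' in a way governed by the allowability of $(\vz_1',\vz_2')$, exactly as the $S_i$ met the ``$p_\infty$ end'' trivially. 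Concretely: $\rr\tt_{\vz_i'}\rr$ carries $p_0$ (which is on neither $S_i$, since Step one already moved the surfaces off $p_0$)\dots more carefully, $p_0 \in S_i$ is false by Lemma~\ref{lem:stepone}, hence $p_0 \notin \rr\tt_{\vz_i'}\rr\,S_i$ would require $p_0 \in \rr\tt_{\vz_i'}\rr\,S_i$, i.e. $\tt_{\vz_i'}^{-1}\rr(p_0) = \tt_{\vz_i'}^{-1}(p_\infty) \in \rr\,S_i$; translating back to the Minkowski patch this is precisely the statement that $p_\infty$ (i.e. the compactification point) of the crooked surface $\rr\,\CP{o+\vz_i,\vu_i}$ has been displaced by $\vz_i'$ into the stem quadrant, which is governed by $(\vz_1',\vz_2')\in\allow{\vu_1,\vu_2}$ via Theorem~\ref{thm:strong}. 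Symmetrically, $p_\infty \in S_i'$ is equivalent to $p_0 \in \tt_{\vz_i'}\rr\,S_i$, i.e. to $\rr(p_0)=p_\infty$ lying on $\tt_{-\vz_i'}^{-1}\cdots$; this again reduces, by the $\rr$-symmetry of the torus data and Theorem~\ref{thm:strong}, to an allowability condition on $(\vz_1',\vz_2')$ at the ``$p_0$ side.''

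The cleanest way to organize this is probably to not track $p_0$ and $p_\infty$ case by case but rather to run the entire Step one / Step two machinery once more with the roles of the two points exchanged: apply $\rr$ to everything in Step one to get that the surfaces $\rr\,S_i$ lie in $\cl{H_i}$ and meet only at $p_0$; then observe that $\rr\tt_{\vz_i'}\rr\,S_i = \rr\tt_{\vz_i'}(\rr S_i)$ is obtained from $\rr S_i$ by a translation $\tt_{\vz_i'}$ in the Minkowski patch $\Eint\setminus\econe{p_\infty}$ centered appropriately, so that $(\vz_1',\vz_2')\in\allow{\vu_1,\vu_2}$ and Theorem~\ref{thm:strong} give that $\rr\tt_{\vz_i'}\rr\,S_i$ lies in $\cl{H_i}$ (using Lemma~\ref{lem:nuinvariant} for the invariance of $\cl{H_i}$ under $\rr$) and that the two resulting surfaces $S_1', S_2'$ no longer meet at $p_0$. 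Combined with the containment $S_i'\subset\cl{H_i}$ and $\cl{H_1}\cap\cl{H_2}=\{p_0,p_\infty\}$, this forces $S_1'\cap S_2' \subseteq \{p_\infty\}$; and then the original Step one conclusion (that $S_1\cap S_2$, hence after the $p_0$-fixing map $S_1'\cap S_2'$, already avoids $p_\infty$) closes the argument.

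I expect the main obstacle to be bookkeeping: making sure the two translations genuinely act at ``opposite ends'' and that conjugating Step one by $\rr$ really does reproduce the allowability hypothesis rather than some twisted version of it. The key check is that $\rr$ preserves the torus data $\data_i = \{p_0,p_\infty,f_1,f_2\}$ while swapping $p_0$ and $p_\infty$ (this is exactly what is asserted before Lemma~\ref{lem:nuinvariant}), so that the stem quadrants and hence $\allow{\vu_1,\vu_2}$ transform equivariantly; once that symmetry is made precise, the ``pull apart at $p_0$'' and ``pull apart at $p_\infty$'' steps are formally identical and can be applied independently, and disjointness follows. The one genuinely substantive point needing care is that the two operations commute well enough -- i.e. that translating near $p_\infty$ does not push the surface back through $p_0$ -- which is handled by the fact that $\rr\tt_{\vz_i'}\rr$ fixes $p_0$ and preserves $\cl{H_i}$, so it cannot create a new intersection at $p_0$ beyond what Step one already removed.
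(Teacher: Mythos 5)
Your skeleton is the same as the paper's: both $\tt_{\vz_i}$ and $\rr\tt_{\vz_i'}\rr$ map $\cl{H_i}$ into itself, so writing $S_i=\conf{\CP{o+\vz_i,\vu_i}}$ and $S_i'=\rr\tt_{\vz_i'}\rr\,S_i$ one gets $S_i'\subset\cl{H_i}$, hence $S_1'\cap S_2'\subseteq\cl{H_1}\cap\cl{H_2}=\{p_0,p_\infty\}$, and then one rules out the two points separately. Your treatment of $p_0$ is essentially right once cleaned up: since $\rr\tt_{\vz_i'}\rr$ fixes $p_0$, $p_0\in S_i'$ if and only if $p_0\in S_i$, and Lemma~\ref{lem:stepone} gives $S_1\cap S_2=\{p_\infty\}$, so $p_0$ cannot lie on both. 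Note, however, that Lemma~\ref{lem:stepone} does \emph{not} say ``$p_0$ is on neither $S_i$'' as you assert: if $\vz_i$ lies on an edge of $\Quad{o,\vu_i}$ then $o\in\CP{o+\vz_i,\vu_i}$ and $p_0\in S_i$; only ``not both'' holds, which is all you need.

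The genuine gap is at $p_\infty$. Your mechanism is to conjugate Step one by $\rr$ and then treat $\tt_{\vz_i'}$ acting on $\rr S_i$ via Theorem~\ref{thm:strong}, on the grounds that $\rr$ preserves the torus data, so that the two pulling-apart operations are ``formally identical and can be applied independently.'' But the second operation is applied to the \emph{already translated} surfaces $S_i$, whose torus data is $\{\iota(o+\vz_i),p_\infty,\ldots\}$; these are not $\rr$-invariant (Lemma~\ref{lem:nuinvariant} applies only to data of the form $\{p_0,p_\infty,*,*\}$). Hence $\rr S_i$ has data $\{p_0,\rr\iota(o+\vz_i),\ldots\}$ with $\rr\iota(o+\vz_i)\neq p_\infty$ whenever $\vz_i\neq 0$, its trace on the patch $\Eint\setminus\econe{p_\infty}$ is not a crooked plane, and Theorem~\ref{thm:strong} (a statement about affine crooked planes and translations) cannot be invoked for the pair $\tt_{\vz_i'}(\rr S_i)$. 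This is precisely the non-commutativity of $\tt_{\vz_i}$ and $\rr\tt_{\vz_i'}\rr$ that you flag and then set aside; the paper's Corollary covers $\rr\tt_{\vz_i'}\rr$ applied only to the original, $\rr$-invariant surfaces. Your final assembly also misassigns the endpoints: Lemma~\ref{lem:stepone} says the $S_i$ meet exactly at $p_\infty$, so Step one certainly does not show that the intersection ``already avoids $p_\infty$.'' One way to close the gap: if $p_\infty\in S_i'$ then $p_0\in\tt_{\vz_i'}\rr S_i$, i.e.\ $\iota(-\vz_i')\in\rr S_i\subset\rr\,\cl{H_i}=\cl{H_i}$, so $o-\vz_i'\in\cl{\CHS{o,\vu_i}}$; since $o-\vz_i'$ also lies in the opposite stem quadrant $\Quad{o,-\vu_i}\subset\cl{\CHS{o,-\vu_i}}$, it must lie on $\CP{o,\vu_i}$, which forces $\vz_i'$ to be null, along an edge of $\Quad{o,\vu_i}$. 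An allowable pair cannot have both entries along edges (see the Remark following the definition of allowable pairs), so $p_\infty$ cannot lie on both $S_1'$ and $S_2'$. Some argument of this kind (or an explicit identification of $S_i'\cap\econe{p_\infty}$) is needed where you appeal to Theorem~\ref{thm:strong}.
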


\begin{proof}
For $i=1,2$, both $\tt_{\vz_i}$ and $\rr\tt_{\vz'_i}\rr$ leave $H_i$ invariant and each removes a point of intersection between the crooked surfaces.
\end{proof}

Figure~\ref{fig:twocp} shows a pair of disjoint crooked surfaces.
\begin{figure}
\centerline{\includegraphics[scale=.5]{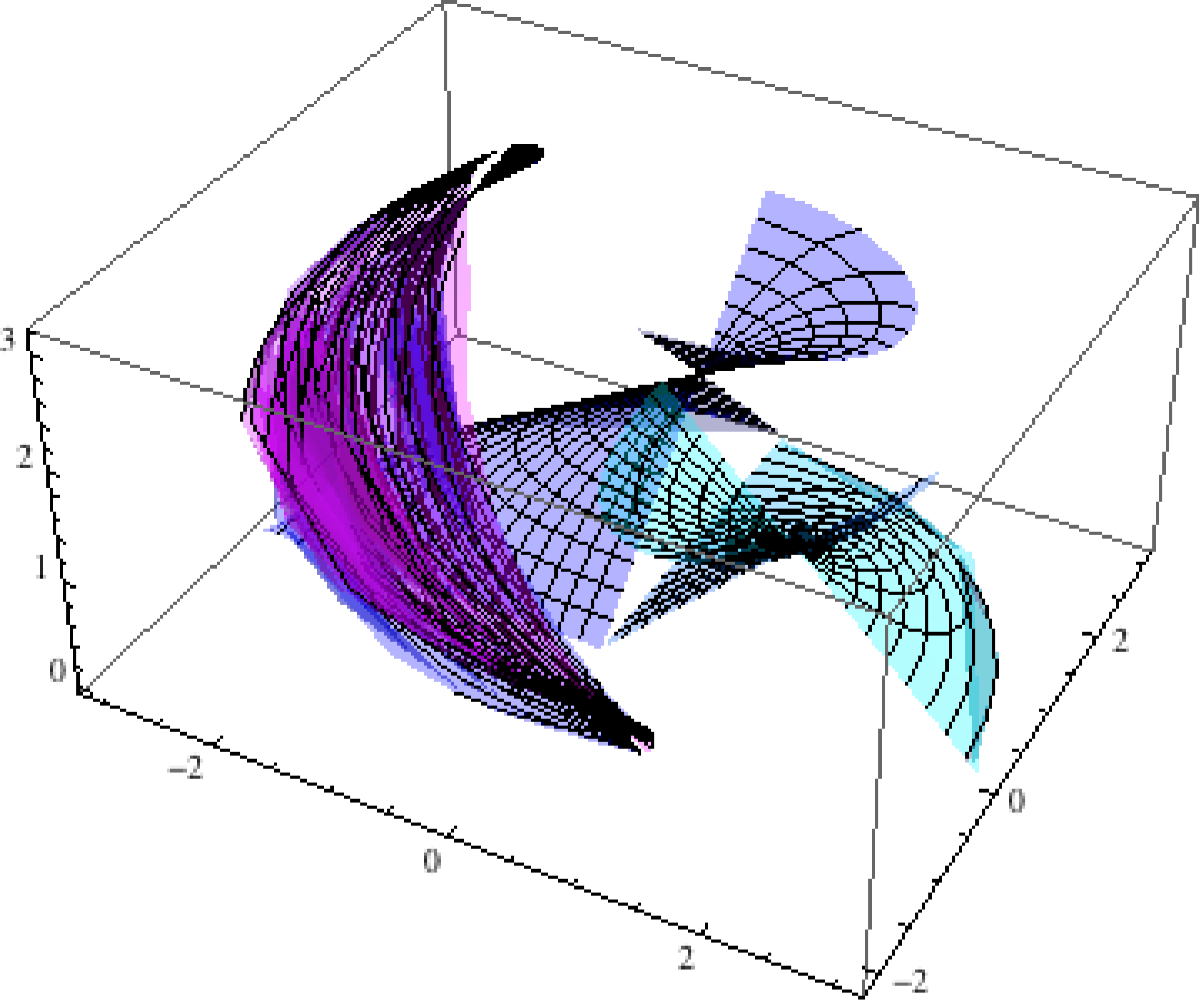}}
\caption{A pair of disjoint crooked surfaces.  Only the wings are shown (in order to keep the picture file a decent size).  The rightmost and center objects are the wings of one crooked surface; the left-most image is in fact a pair of wings, for the second crooked plane.}
\label{fig:twocp}
\end{figure}
%
%

\section{Schottky groups}\label{sec:schottky}
The procedure described in \S\ref{sec:disjoint} can be applied to obtain any
number of 
pairwise disjoint crooked surfaces and more precisely, pairwise disjoint regions
bounded by crooked 
surfaces.   In this section, we will show that our construction yields examples closely related to the {\em Lorentzian Schottky groups} introduced
and studied by Frances in~\cite{F02,F05}.  

\begin{defn}
We say that $\Gamma=\langle\hh_1,\ldots,\hh_n\rangle<\SOTT$ admits a {\em
crooked Schottky domain} if there exist $2n$ 
pairwise disjoint crooked surfaces $\CS_1^\pm,\ldots,\CS_n^\pm$, bounding $2n$
pairwise disjoint (open) regions $U_1^\pm,\ldots,U_n^\pm$, 
such that for each $i=1,\ldots,n$\,:
\begin{equation}\label{eq:pingpong}
 \hh_i(U_i^-)=\Eint\setminus\cl{U_i^+} .
\end{equation}
\end{defn}

The previous section suggests a way of constructing such groups.  To keep the discussion as simple as possible, we will describe a cyclic example.  

Let $\gg\in\SOoTO$ be a hyperbolic element (that is, $\gg$ has three distinct real eigenvalues).  Acting by isometries on the hyperbolic plane $\HP$, the cyclic free group $\langle\gg\rangle$ admits a fundamental domain bounded by two disjoint lines, corresponding to a pair of crooked planes $\CP{o,\vu_i}$, where $\vu_1,~\vu_2$ are consistently oriented.   Slightly abusing notation, denote also by $\gg$ the element in $\SOTT$ that fixes $p_\infty$ and whose action on $\iota(\rto)$ is the same as the action of $\gg$ on $\rto$.  Now choose $(\vz_1,\vz_2),~(\vz_1',\vz_2')\in\allow{\vu_1,\vu_2}$.  Set\,: 
\begin{align*}
\tt_1 & =\rr\tt_{\vz'_1}\rr\tt_{\vz_1} \\
\tt_2 & = \rr\tt_{\vz'_2}\rr\tt_{\vz_2}.
\end{align*}
Then\,: 
\begin{equation*}
 \tt_2\gg\tt_1^{-1}(U^-)=\Eint\setminus\cl{U^+} .
\end{equation*}
where\,:
\begin{align*}
U^- & = \rr\tt_{\vz'_1}\rr\conf{\CHS{o+\vz_1,\vu_1}} \\
U^+ & = \rr\tt_{\vz'_2}\rr\conf{\CHS{o+\vz_2,\vu_2}}
\end{align*}
which are disjoint, by Theorem~\ref{thm:disjointcs}.  (Observe that for $\vv\in\rto$, we have $\tt_\vv^{-1}=\tt_{-\vv}$ as expected.)

Let $\Gamma=\langle\hh_1,\ldots,\hh_n\rangle$ be a subgroup of $\SOTT$ admitting a crooked Schottky domain, bounded by $U_1^\pm,\ldots,U_n^\pm$.  We will see that the action of $\Gamma$ is very much like the usual action of a Schottky group acting on a sphere (or the hyperbolic plane).

First, consider each cyclic subgroup $\langle\hh_i\rangle$.  Set\,: 
\begin{equation*}
F_i=\E\setminus\left(U_i^-\cup U_i^+\right).
\end{equation*}
Clearly, $\langle\hh_i\rangle$ acts properly on the open set of all $\Gamma$-translates of $F_i$.  Since the $U_i^\pm$ are pairwise disjoint, we may apply the Klein-Maskit Combination Theorem to conclude that the free group $\Gamma$ acts properly on the following open set\,:
\begin{equation*}
\Omega=\bigcup_{\gg\in\Gamma}\gg(F)
\end{equation*}
where\,: 
\begin{equation*}
F=\bigcap_{i=1}^nF_i.
\end{equation*}

Moreover, $\Omega/\Gamma$ is compact.  Indeed, it is obtained by removing the open sets $U_1^\pm,\ldots,U_n^\pm$ from $\Eint$, which is compact, and then gluing together the boundaries of each pair $U_i^\pm$.  Therefore, $\Gamma$ is a {\em Lorentzian Kleinian group}~\cite{F02,F05}.

The complement of $\Omega$ also resembles the limit set of an ordinary Schottky group.  There is a one-to-one correspondence (in fact, a homeomorphism) between the set of connected components of $\Eint\setminus\Omega$ and the limit set of $\Gamma$.  An infinite word $\hh_{i_1}^{s_1}\hh_{i_2}^{s_2}\ldots$ corresponds to the Hausdorff limit of the following sequence of compact sets\,:
\begin{equation}\label{eq:nested}
\hh_{i_1}^{s_1}\ldots\hh_{i_{n-1}}^{s_{n-1}}\cl{U_{i_n}^{s_n}}
\end{equation}
where $s_i=\pm 1$ or $\pm$.

\begin{rem}
We make two remarks on notation which will be tacitly assumed from now on.  The first is that when we write a word in terms of the $\hh_j$'s, the superscripts $s_j$ will always be either 1 or -1; but   we will always assume the words to be {\em reduced}, meaning that no $\hh_j^{s_j}$ is followed by its inverse.  The second is that for sets appearing in a Schottky configuration, the superscripts will always be either $+$ or $-$; for example, if $s=-$, then $-s=+$ etc.
\end{rem}

\subsection{Conformal dynamics}

The group $\Gamma$ is {\em almost} what Frances calls a Lorentzian Schottky group.  The only missing ingredient is a condition on the dynamics of the generators $\hh_i$, which we now discuss.  Our presentation is admittedly terse and we refer the reader to~\cite{F05} and~\cite{BCDGM} for more details. 

The semisimple Lie group $\SOTT$ admits a Cartan decomposition $\SOTT=KAK$, where $K$ is a maximal compact subgroup and $A$ is a Cartan subgroup, which we may take as consisting of diagonal matrices of the following form\,:

\begin{equation}\label{eq:dynamics}
\begin{bmatrix}
         e^{\lambda} & 0 & 0 & 0 & 0 \\
	 0 & e^{\mu} & 0 & 0 & 0 \\
	 0 & 0 & 1 & 0 & 0 \\
	 0 & 0 & 0 & e^{-\mu} & 0 \\
	 0 & 0 & 0 & 0 & e^{-\lambda}
        \end{bmatrix}
\end{equation}
where $\lambda>\mu>0$.

Let $\{\gg_n\}$ be a sequence in $\SOTT$; then we may write $\gg_n=\xi_n\alpha_n\xi'_n$, where $\xi_n,\xi'_n\in K$ and $\alpha_n\in A$.  Denote by $\lambda_n,\mu_n,$ the exponents appearing in the first two diagonal terms of $\alpha_n$, as in~\eqref{eq:dynamics}.  Set $\delta_n=\lambda_n-\mu_n$.

Suppose now that $\{\gg_n\}$ is a divergent sequence that {\em tends simply to infinity}, that is, the sequences $\{\xi_n\}$, $\{\xi'_n\}$ converge in $K$ and\,: 
\begin{align*}
\lambda_n & \longrightarrow \lambda_\infty \\
\mu_n & \longrightarrow \mu_\infty \\
\delta_n & \longrightarrow \delta_\infty
\end{align*}
where $\lambda_\infty,\mu_\infty,\delta_\infty\in\R\cup\{\infty\}$.  Then $\{\gg_n\}$ is said to have\,:
\begin{itemize}
\item {\em bounded distortion} if $\mu_\infty\in\R$;
\item {\em balanced distortions} if $\lambda_\infty=\mu_\infty=\infty$ and $\delta_\infty\in\R$;
\item {\em mixed distortions} if $\lambda_\infty=\mu_\infty=\delta_\infty=\infty$ .
\end{itemize}
Of particular interest are groups which contain no sequences of bounded distortion, for they act properly on 4-dimensional anti-de Sitter space, of which $\Eint$ is the boundary~\cite{F05}.

For $z\in\Eint$, we define its {\em dynamical set} (relative to the sequence $\{\gg_n\}$) as follows\,:
\begin{equation*}
D(z)=\bigcup_{z_n\rightarrow z}\{\mbox{ accumulation points of } \{\gg_n(z_n)\}\}.
\end{equation*}
For a set $X\subset\Eint$, we define $D(X)$ to be the union of the dynamical sets of its elements.  

A group acts properly on a set $\Omega$ if and only if no two points in $\Omega$ are in each other's dynamical sets.

Sequences of bounded distortion are characterized by the existence of two points $p^+,~p^-\in\Eint$, respectively called the {\em attractor} and the {\em repellor}, whose associated lightcones, $\econe{p^\pm}$ have the following dynamical property.  There exists a diffeomorphism $\mm$ between the set of photons in $\econe{p^-}$ and $\econe{p^+}$, such that\,: 
\begin{itemize}
\item for every $z\in\Eint\setminus\econe{p^-}$, $D(z)=\{p^+\}$;
\item for every photon $\phi\subset\econe{p^-}$ and every $z\neq p^-\in\phi$, $D(z)=\mm(\phi)$;
\item $D(p^-)=\Eint$.
\end{itemize}
Roughly speaking, to obtain a set on which a sequence of bounded distortion acts properly, one must remove at least one of the two photons $\phi,\mm(\phi)$, for each $\phi\subset\econe{p^-}$.  In contrast, sequences with balanced or mixed distortions require the removal of only a pair of photons.  In particular, Lorentzian Schottky groups, which are generated by elements with mixed distortions, act properly on the complement of a Cantor set of photons.

While we cannot say whether our examples must have mixed distortions, or whether they might admit balanced distortions, we can nevertheless prove the following.

\begin{thm}
Let $\Gamma<\SOTT$ admit a crooked Schottky domain.  Then $\Gamma$ does not admit any sequences with bounded distortion.
\end{thm}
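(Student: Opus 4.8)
The plan is to argue by contradiction, using the characterization of bounded-distortion sequences recalled just before the statement. Suppose $\Gamma$ admits a crooked Schottky domain with data $\CS_1^\pm,\dots,\CS_n^\pm$ and $U_1^\pm,\dots,U_n^\pm$, and suppose for contradiction that some sequence $\{\gg_k\}\subset\Gamma$ has bounded distortion. Then there are points $p^+,p^-\in\Eint$ (attractor and repellor) and a diffeomorphism $\mm$ between the photons of $\econe{p^-}$ and those of $\econe{p^+}$ such that $D(z)=\{p^+\}$ for every $z\notin\econe{p^-}$, while $D(z)=\mm(\phi)$ for $z\in\phi\subset\econe{p^-}$, $z\neq p^-$, and $D(p^-)=\Eint$. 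The key point I would extract from this is a \emph{rigidity} statement: for a set $\Omega$ on which $\Gamma$ acts properly, the complement $\Eint\setminus\Omega$ must contain, for \emph{every} photon $\phi\subset\econe{p^-}$, at least one of the two photons $\phi$, $\mm(\phi)$. In particular $\Eint\setminus\Omega$ must contain a full photon from ``most'' lightcone directions at $p^-$ — it cannot be a finite or otherwise ``thin'' set of points; it must essentially contain (a homeomorphic copy of) all of $\econe{p^-}$ worth of photons, in the sense of meeting the photon space of $\econe{p^-}$ in a set whose image covers at least half of each $\{\phi,\mm(\phi)\}$.

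The second ingredient is to identify $\Omega$ and its complement concretely for our group. As established in Section~\ref{sec:schottky}, $\Gamma$ acts properly on $\Omega=\bigcup_{\gg\in\Gamma}\gg(F)$ with $F=\bigcap_i F_i$, and the complement $\Eint\setminus\Omega$ is the limit set, which is the union of the nested compact sets in~\eqref{eq:nested}: it is a Hausdorff limit of translates $\hh_{i_1}^{s_1}\cdots\hh_{i_{k-1}}^{s_{k-1}}\cl{U_{i_k}^{s_k}}$. Crucially, each $U_i^\pm$ is an open region bounded by a \emph{crooked surface} $\CS_i^\pm$. Now I would invoke Lemma~\ref{lem:nocontain}: any lightcone in $\Eint$ intersects \emph{both} components of the complement of a crooked surface. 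Apply this to the lightcone $\econe{p^-}$ and to the crooked surface $\CS_{i_1}^{s_1}$: its exterior component (the one not contained in $U_{i_1}^{s_1}$, i.e. the component containing all the other $U_j^{t}$'s and hence most of the limit set) meets $\econe{p^-}$. So $\econe{p^-}\cap\Omega$ contains a point $q$ lying outside $\cl{U_{i_1}^{s_1}}$; but then $q$ also lies outside the deeper nested sets $\hh_{i_1}^{s_1}\cdots\cl{U_{i_k}^{s_k}}$ provided these are contained in $U_{i_1}^{s_1}$ (which they are, by disjointness of the Schottky regions and the ping-pong relation~\eqref{eq:pingpong}), so $q\notin\Eint\setminus\Omega$, confirming $q\in\Omega\cap\econe{p^-}$. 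Iterating / using that $\Eint\setminus\Omega$ is the whole limit set, I want to conclude that $\econe{p^-}$ actually meets $\Omega$ in a \emph{large} set — large enough that its photons cannot all be ``caught'' by the rule ``$\phi$ or $\mm(\phi)$ lies in the complement.''

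More precisely, the contradiction I would aim for is this: the repellor photon analysis forces $\Eint\setminus\Omega\supset$ (one photon out of each pair $\{\phi,\mm(\phi)\}$) inside $\econe{p^-}\cup\econe{p^+}$, so in particular $\econe{p^-}$ is covered, up to the $\mm$-pairing, by the limit set. But the limit set is contained in $\bigcup_{i,\pm}\bigcup_{\gg\in\Gamma}\gg\cl{U_i^\pm}$, and each $\gg\cl{U_i^\pm}$ is the closure of a crooked halfspace bounded by a crooked surface; by Lemma~\ref{lem:nocontain} no lightcone is contained in (the closure of) a crooked surface's complementary component, so no single $\gg\cl{U_i^\pm}$ contains a whole photon of $\econe{p^-}$ unless that photon is literally a photon of the bounding crooked surface $\gg\CS_i^\pm$ — and a crooked surface contains only finitely many photons through any point, indeed its photons form the ``scaffolding'' (two pairs of photons) plus the half-lightcone photons of the wings. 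Since there are countably many translates $\gg\CS_i^\pm$ but the photons of $\econe{p^-}$ form a circle's worth of directions that must be covered cofinitely (half of each $\mm$-pair), we would need uncountably many whole photons of $\econe{p^-}$ inside the limit set, each forced to be a wing-photon of some $\gg\CS_i^\pm$; a counting/Baire-category argument then gives the contradiction, since a countable union of crooked surfaces contains only countably many complete photons in any fixed lightcone. \textbf{The main obstacle} I anticipate is making the ``large set of photons'' half of the argument precise: converting the pointwise statement ``$\econe{p^-}$ meets $\Omega$'' (easy from Lemma~\ref{lem:nocontain}) into the quantitative statement that $\econe{p^-}\cap\Omega$ contains points on a circle's-worth (or at least uncountably many) distinct photons of $\econe{p^-}$, so that the photon-pairing obstruction genuinely fails. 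This likely requires either applying Lemma~\ref{lem:nocontain} simultaneously to the bounding crooked surfaces of infinitely many levels of the nesting~\eqref{eq:nested}, or a direct transversality argument showing $\econe{p^-}\cap\Omega$ is open and dense in $\econe{p^-}$ and hence not contained in any countable union of photons.
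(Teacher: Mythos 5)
There is a genuine gap, and it sits exactly at the step your whole counting strategy rests on. You claim, invoking Lemma~\ref{lem:nocontain}, that a translate $\gg\,\cl{U_i^\pm}$ of a closed region bounded by a crooked surface cannot contain a complete photon of $\econe{p^-}$ unless that photon lies on the bounding crooked surface. Lemma~\ref{lem:nocontain} says nothing of the sort: it asserts that no entire \emph{lightcone} fits in one complementary component, not that no entire \emph{photon} does. In fact the claim is false: the closure of a crooked halfspace contains uncountably many complete photons that do not lie on its boundary crooked surface. For instance, by Theorem~\ref{thm:strong} one can place an entire disjoint crooked plane $\CP{o+\vz,\vu}$ inside $\cl{\CHS{o,\vu}}$, and its conformal compactification carries full photons (already the compactified stem boundary lines) lying in $\cl{H}$ but not on the original crooked surface; more simply, any lightlike line deep inside the halfspace compactifies to a complete photon inside the closed region. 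The fallback Baire/counting remark does not rescue this, because the limit set $\Eint\setminus\Omega$ is not a countable union of crooked surfaces but a nested intersection of unions of closed crooked halfspaces, and for a bounded-distortion sequence a single connected component of the limit set can a priori be a two-dimensional union of photons of $\econe{p^+}$ (namely $D(P_U)$, the $\mm$-image of all photons of $\econe{p^-}$ meeting $U$). Ruling out precisely this possibility is the crux, and it cannot be done by cardinality: Frances' Lorentzian Schottky limit sets really are Cantor sets of complete photons, so ``uncountably many full photons in the complement'' is not by itself contradictory. (A secondary issue: knowing that $\econe{p^-}$ meets the component of the complement of one crooked surface $\CS_{i_1}^{s_1}$ outside $\cl{U_{i_1}^{s_1}}$ does not put that point in $\Omega$, since it may lie in some other $\cl{U_j^t}$ and hence still in the limit set.)

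Your opening ingredient is fine: properness plus the dynamical characterization does force, for each photon $\phi\subset\econe{p^-}$, that $\phi$ or $\mm(\phi)$ lies entirely in $\Eint\setminus\Omega$. But the paper does not convert this into a counting contradiction; instead it exploits the combinatorial structure of the crooked Schottky domain. After normalizing so that all $\gg_j$ begin with $\hh_a$ and end with $\hh_b^s$ (so $p^+\in U_a^+$, $p^-\in U_b^{-s}$), it identifies the component $K(\gg_\infty)$ of $\Eint\setminus\Omega$ associated with the infinite word, shows $K(\gg_\infty)=D(P_U)$ where $P_U$ is the (nonempty, by Lemma~\ref{lem:nocontain}) set of photons of $\econe{p^-}$ meeting a set $U$ recurring in~\eqref{eq:nested}, shows $P_U$ is not all of the photons of $\econe{p^-}$ (again via Lemma~\ref{lem:nocontain}, applied at $p^+$), and then deforms $U$ to a set $U'$ still giving a Schottky configuration with $P_{U'}\cap P_U\neq\emptyset$ but $P_{U'}\not\subset P_U$. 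The limits of $\gg_\infty^{(j)}(U)$ and $\gg_\infty^{(j)}(U')$ would then be components of the same complement that are distinct yet intersect --- the actual contradiction. If you want to salvage your approach, you would need to replace the false ``no full photons inside a closed crooked halfspace'' step by an argument at the level of components of the limit set, which is essentially what the deformation trick accomplishes.
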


Our argument is modeled on the proof of Lemma 8 in~\cite{F05}, which shows that every connected component of the limit set of a Lorentzian Schottky group consists of a single photon.  (The argument does not require that the generators of $\Gamma$ have mixed dynamics.)  We refer the reader to that paper for more details.

\begin{proof}
Let $\hh_1,\ldots,\hh_n$ be a set of free generators of $\Gamma$ and let $U^\pm_1,\ldots,U^\pm_n$ be the open sets in the complement of the crooked fundamental domain.  Suppose that $\{\gg_j\}\subset\Gamma$ is a sequence with bounded distortion.  We will denote its attractor and repellor by $p^+,p^-$, respectively and the diffeomorphism matching photons in $\econe{p^\pm}$ by $\mm$.

Write each $\gg_j$ as a reduced word in the free generators.  Taking a subsequence if necessary, we may assume that they all share the same first and last letter, say $\hh_a$ and $\hh_b^s$, respectively (assuming that the first letter is a generator simplifies the writing, without too much loss of generality).  Therefore\,:
\begin{align*}
p^+ & \in U_a^+ \\
p^- & \in U_b^{-s} .
\end{align*}

Let $\gg_\infty$ be the infinite word corresponding to the sequence $\{\gg_j\}$\,:
\begin{equation*}
\gg_\infty=\hh_a\hh_{i_1}^{s_1}\hh_{i_2}^{s_2}\ldots\hh_{i_n}^{s_j}\ldots .  
\end{equation*}
Denote the sequence of left-factors of $\gg_\infty$ as follows\,:
\begin{equation*}
\gg_\infty^{(j)}=\hh_a\hh_{i_1}^{s_1}\hh_{i_2}^{s_2}\ldots\hh_{i_j}^{s_j}.  
\end{equation*}
Since $\{\gg_j\}$ is a sequence with bounded distortion, $\{\gg_\infty^{(j)}\}$ is also a sequence with bounded distortion. 

Denote by $K(\gg_\infty)$ the connected component of $\Eint\setminus\Omega$ corresponding to $\gg_\infty$.  Let $U$ be one of the sets $U^\pm_1,\ldots,U^\pm_n$ which appears infinitely often in the sequence of elements~\eqref{eq:nested}.  Without loss of generality, we may assume that $U\neq U_b^{-s}$.  (If need be, take a conjugate of the sequence; this will not affect the dynamics.)  Then $K(\gg_\infty)$ equals the limit of the sequence of sets $\gg_\infty^{(j)}(U)$.

Let $P_U$ be the set of photons in $\econe{p^-}$ which intersect $U$.  This set is non-empty, by Lemma~\ref{lem:nocontain}, since a crooked surface bounds $U$.   By the characterization of a sequence with bounded distortion\,: 
\begin{equation*}
D(P_U)=K(\gg_\infty).
\end{equation*}
What this means is, the only photons belonging to $K(\gg_\infty)$ are those of the form $\mm(\phi)$, where $\phi\in P_U$.

On the other hand, there must exist a photon in $\econe{p^-}$ which does not belong to $P_U$.  Otherwise, every photon in $\econe{p^+}$ would have to be in the complement of $\Omega$ and therefore, $\econe{p^+}\subset U_a^+$, but Lemma~\ref{lem:nocontain} forbids this.

Now we can slightly deform $U$ and its paired open set to obtain a new compact set $U'$ which still belongs to a Schottky configuration for $\Gamma$.  (With some work, we could take $U'$ to be a crooked halfspace, but it is not necessary for the argument.)  We can do this in such a way that\,:
\begin{itemize} 
\item $P_U\cap P_{U'}\neq\emptyset$;
\item there exists a photon $\phi\in P_{U'}$ such that $\phi  \notin P_U$.
\end{itemize}
However, the union of the $\Gamma$-translates of the new sets still equals $\Omega$, since $U'$ is contained in some finite set of $\Gamma$-translates of $F$.  Therefore, $\gg_\infty^{(j)}(U')$ converges to a set, contained in a connected component of $\Eint\setminus\Omega$ that must be different from $K(\gg_\infty)$, because it contains $\mm(\phi)$, yet intersects $K(\gg_\infty)$ since $D(P_U)\cap D(P_{U'})\neq\emptyset$.  This is impossible; therefore $\Gamma$ cannot contain a sequence with bounded distortion.

\end{proof}
\section{Negatively extended crooked surfaces}\label{sec:negative}
 
 This last section is really a short note, which we think merits further exploration.  Recall that the conformal compactification of a negatively extended 
 crooked plane is in the $\SOTT$-orbit of the basic example from~\S\ref{sec:basic}, as are all crooked surfaces, but not in its $\sott$-orbit.  Now in $\E$, a positively extended crooked plane {\em always} intersects a negatively crooked plane.  However, the situation is different in the Einstein Universe.  
 
 Indeed, let $\CS_1$ be the crooked surface of the basic example.  Thus\,: 
 \begin{equation*}
 \CS_1=\conf{\CP{o,(1,0,0)}}.
 \end{equation*}
Now set\,: 
\begin{equation*}
\CS'_1=\conf{\C^-}
\end{equation*}
where $\C^{-}$ is the negatively extended crooked plane with vertex $o$ and director $(1,0,0)$.  Then set\,:
\begin{equation*}
\CS_2=\mu\left(\C^-\right)
\end{equation*}
where $\mu\in\SOTT$ has matrix\,: 
\begin{equation*}
\begin{bmatrix}
-5/6& 1& -12& 12& 5/6 \\
0& -2& 107/12& -109/12& 0 \\
5/9& 5/3& -20& 20& 13/9 \\
  0& -2& 179/12& -181/12& 0 \\
  1/18& 5/3& -20& 20& 35/18
\end{bmatrix}.
\end{equation*}
It involves several calculations, but one can check that $\CS_1$ and $\CS_2$ are disjoint.  



 \bibliographystyle{amsplain}
 \bibliography{Vref}
  
\end{document}